\newtheorem{theorem}{Theorem}
\newtheorem{lemma}[theorem]{Lemma}
\newtheorem{definition}[theorem]{Definition}
\newtheorem{proposition}[theorem]{Proposition}
\newenvironment{proof}{\noindent{\bf Proof.}}{\hspace*{2mm}~$\square$}
\newenvironment{proofof}[1]{\noindent{\bf Proof of #1.}}{\hspace*{2mm}~$\square$}
\newcommand{\Z}{\mathbb{Z}}
\newcommand{\N}{\mathbb{N}}
\newcommand{\A}{\mathscr{A}}
\newcommand{\B}{\mathscr{B}}
\newcommand{\C}{\mathscr{C}}
\newcommand{\D}{\mathscr{D}}
\newcommand{\R}{\mathscr{R}}
\newcommand{\Lat}{\mathscr{L}}
\newcommand{\G}{\mathscr{G}}
\renewcommand{\H}{\mathscr{H}}
\newcommand{\simplex}{\mathbb{S}}
\newcommand{\uc}{\mathbf{x}}
\newcommand{\ud}{\mathbf{y}}
\newcommand{\ue}{\mathbf{z}}
\renewcommand{\b}{\beta}
\renewcommand{\c}{\mathbf{c}}
\renewcommand{\d}{\mathbf{d}}
\newcommand{\e}{\mathbf{e}}
\renewcommand{\b}{\beta}
\newcommand{\rc}{\beta_{\mathbf{c}}}
\newcommand{\rd}{\beta_{\mathbf{d}}}
\newcommand{\rcc}{\beta_{\mathbf{c}, \mathbf{c}}}
\newcommand{\rcd}{\beta_{\mathbf{c}, \mathbf{d}}}
\newcommand{\rdc}{\beta_{\mathbf{d}, \mathbf{c}}}
\newcommand{\rdd}{\beta_{\mathbf{d}, \mathbf{d}}}
\newcommand{\rci}{\beta_{\mathbf{c}, \mathbf{i}}}
\newcommand{\rdi}{\beta_{\mathbf{d}, \mathbf{i}}}
\newcommand{\is}{\mathbf{i}}
\newcommand{\js}{\mathbf{j}}
\newcommand{\ir}{\beta_{\mathbf{i}}}
\newcommand{\jr}{\beta_{\mathbf{j}}}
\newcommand{\ind}{\mathbf{1}}
\newcommand{\ep}{\epsilon}
\newcommand{\n}{\hspace*{-5pt}}
\DeclareMathOperator{\card}{card}
\DeclareMathOperator{\sign}{sign}
\DeclareMathOperator{\poisson}{Poisson \,}
\DeclareMathOperator{\exponential}{Exponential \,}
\begin{document}

\begin{frontmatter}
\title     {Local interactions promote cooperation in \\ cooperator-defector systems}
\runtitle  {Local interactions promote cooperation}
\author    {Nicolas Lanchier}
\runauthor {Nicolas Lanchier}
\address   {School of Mathematical and Statistical Sciences \\ Arizona State University \\ Tempe, AZ 85287, USA.}

\maketitle

\begin{abstract} \ \
 This paper studies a variant of the multi-type contact process as a model for the competition between cooperators and defectors on integer lattices.
 Regardless of their type, individuals die at rate one.
 Defectors give birth at a fixed rate whereas cooperators give birth at a rate that increases linearly with the number of nearby cooperators.
 In particular, it is assumed that only cooperators benefit from cooperators, which is referred to as kin-recognition in the ecological literature.
 To understand how the inclusion of space in the form of local interactions affects the dynamics, the results for the interacting particle system
 are compared with their counterpart for the non-spatial mean-field model.
 Due to some monotonicity with respect to the parameters, both the spatial and non-spatial models exhibit a unique phase transition.
 Our analysis shows however a major difference:
 In the spatial model, when cooperation is strong enough, the cooperators out-compete the defectors even when starting at arbitrarily low density.
 In contrast, regardless of the strength of cooperation, when the initial density of cooperators is too low, the defectors out-compete the cooperators
 in the non-spatial model.
 In particular, when cooperation is sufficiently strong, the cooperators can invade the defectors in their equilibrium in the spatial model but not in
 the non-spatial model, showing that space in the form of local interactions promotes cooperation.
\end{abstract}

\begin{keyword}[class=AMS]
\kwd[Primary ]{60K35}
\end{keyword}

\begin{keyword}
\kwd{Interacting particle systems, multi-type contact process, oriented site percolation, block construction, evolutionary game theory, prisoner's dilemma,
     cooperator, defector.}
\end{keyword}

\end{frontmatter}


\section{Introduction}
\label{sec:intro}

\indent In game theory, cooperating and defecting refer to the two possible strategies in the prisoner's dilemma, the most
 popular example of a two-person game.
 Imagine that the two accomplices of a crime, the two players, are arrested and interviewed separately by the police.
 Each player has the option to either defect by betraying his accomplice and testifying against him, or cooperate by remaining silent.
 When one player cooperates while the other one defects, the cooperator gets the smallest possible payoff called the sucker's payoff while the
 defector gets the largest possible payoff called the temptation.
 When both players cooperate they get the same payoff called the reward while when both players defect they get the same payoff called the punishment.
 The prisoner's dilemma is characterized by the conditions 
 $$ \hbox{sucker's payoff} < \hbox{punishment} < \hbox{reward} < \hbox{temptation}. $$
 Because the punishment is better than the sucker's payoff and the temptation is better than the reward, no matter the strategy of the other player,
 each player always gets a better payoff by defecting.
 Accordingly, in the context of evolutionary game theory~\cite{hofbauer_sigmund_1998, maynardsmith_price_1973} where the dynamics of a population of
 players are derived by interpreting payoff as fitness, simple models based on ordinary differential equations such as the replicator equation again
 predict that the defectors out-compete the cooperators.
 Although reasonable from a mathematical point of view, this conclusion appears to contradict the fact that cooperation is ubiquitous in nature.
 This apparent contradiction between theory and observations has been resolved through the study of more realistic models, with the notable
 example of the death-birth updating process~\cite{nowak_2006, ohtsuki_al_2006} in which each players' payoff is calculated based on a set of
 neighbors on a connected graph.
 In this model, it is possible for the cooperators to out-compete the defectors.
 The basic idea is that, due to the inclusion of space in the form of local interactions, cooperators are more likely to be located next to cooperators
 and defectors are more likely to be located next to defectors.
 Because the reward is better than the punishment, clusters of cooperators can expand linearly in space.
 This result has been proved analytically for various spatially explicit
 models~\cite{chen_2013, cox_durrett_perkins_2012, durrett_2014, evilsizor_lanchier_2016, foxall_lanchier_2017}, confirming that the inclusion of space
 in the form of local interactions promotes cooperation. \\
\indent The main objective of this paper is to again understand the effect of space in the form of local interactions in cooperator-defector systems.
 The model we study also falls in the category of interacting particle systems~\cite{liggett_1985, liggett_1999} but the evolution rules are somewhat different
 from the ones of the death-birth updating process:
 Defectors have a fixed birth rate that does not depend on the nearby configuration, indicating that defectors do not benefit from interacting
 with cooperators, while cooperators have a birth rate that is increasing with respect to the number of cooperators in their neighborhood.
 The case of interest is when the birth rate of the cooperators can be smaller or larger than the birth rate of the defectors depending on the nearby configurations. \\
\indent The interacting particle system we consider is a natural variant of the multi-type contact process inspired from the model introduced
 in~\cite[remark~3]{czuppon_pfaffelhuber_2017}.
 The latter is a variant of the biased voter model where each site of the~$d$-dimensional integer lattice is occupied by either a cooperator or a defector.
 Defectors give birth to defectors at rate one plus~$\rd$ while cooperators give birth to cooperators at rate one plus~$\rc$ times the fraction of their neighbors
 that are cooperators.
 Like in the biased voter model, the offspring replaces one of the~$2d$ parent's neighbors chosen uniformly at random.
 The parameter~$\rd$ represents the fitness advantage of the defectors over the cooperators, thus measuring the energy saved from not cooperating.
 The parameter~$\rc$ represents the benefit from cooperation, and the model assumes that only cooperators benefit from nearby cooperators which, in the
 ecological literature, is referred to as kin-recognition~\cite{czuppon_pfaffelhuber_2017}. \\
\indent Though the main objective of~\cite{czuppon_pfaffelhuber_2017} was not to study this model but two related models, their proofs easily adapt to
 show that, for the process on the integers~$\Z$ starting from a translation invariant product measure with a positive density of cooperators and defectors,
\begin{itemize}
 \item the cooperators win, i.e., the probability that any given site is occupied by a cooperator tends to one as time goes to infinity,
       if and only if~$\rc > 2 \rd$, \vspace*{2pt}
 \item the defectors win, i.e., the probability that any given site is occupied by a defector tends to one as time goes to infinity,
       if and only if~$\rc < 2 \rd$, \vspace*{2pt}
 \item the process clusters, i.e., the probability that any given bounded set contains cooperators and defectors tends to zero as time goes
       to infinity, when~$\rc = 2 \rd$.
\end{itemize}
 The authors, however, did not say much about the process in dimensions~$d > 1$.
 In contrast, our analysis holds in any dimensions. \\
\indent As mentioned above, the model we study is similar to the model in~\cite[remark~3]{czuppon_pfaffelhuber_2017} except that it is based on the multi-type
 contact process rather than the biased voter model.
 In particular, the process also includes deaths, which results in the presence of empty sites.
 More precisely, our model is a continuous-time~Markov chain whose state at time~$t$ is a spatial configuration
 $$ \xi_t : \Z^d \longrightarrow S = \{\c, \d, \e \} $$
 with~$\xi_t (x)$ denoting the state of site~$x \in \Z^d$ at time~$t$, either
 $$ \c = \hbox{occupied by a cooperator}, \quad \d = \hbox{occupied by a defector}, \quad \e = \hbox{empty}. $$
 To describe the possible transitions, for each~$\xi : \Z^d \to S$ and each~$\is \in S$, we denote by~$\xi_{x, \is}$ the configuration
 obtained from~$\xi$ by setting the state at site~$x$ equal to~$\is$ and leaving the states at all the other sites unchanged.
 The dynamics are then described by the Markov generator
\begin{equation}
\label{eq:ips}
  \begin{array}{rcl}
      Lf (\xi) & \n = \n & \displaystyle \sum_x \sum_{y \sim x} \bigg(\frac{\b}{2d} + \sum_{z \sim y} \ \frac{\rc \,\ind \{\xi (z) = \c \}}{4d^2} \bigg) \, \ind \{\xi (y) = \c, \xi (x) = \e \} \,[f (\xi_{\c, x}) - f (\xi)] \vspace*{2pt} \\
               & \n + \n & \displaystyle \sum_x \sum_{y \sim x} \bigg(\frac{\b + \rd}{2d} \bigg) \,\ind \{\xi (y) = \d, \xi (x) = \e \} \,[f (\xi_{\d, x}) - f (\xi)] \vspace*{5pt} \\
               & \n + \n & \displaystyle \sum_x \ [f (\xi_{\e, x}) - f (\xi)] \end{array}
\end{equation}
 where~$x \sim y$ means that vertex~$x$ and vertex~$y$ are nearest neighbors on the~$d$-dimensional integer lattice.
 In words, cooperators give birth to cooperators at rate~$\b$ plus~$\rc$ times the fraction of adjacent cooperators while, regardless of the nearby configuration,
 defectors give birth to defectors at the fixed rate~$\b + \rd$.
 In either case, the offspring is sent to one of the~$2d$ parent's neighbors chosen uniformly at random.
 If the target site is empty, it becomes occupied by the offspring, otherwise the offspring is removed from the system as a result of a lack of space available.
 In addition, regardless of their type, individuals die independently at rate one.
 The parameter~$\b$ is the basic birth rate of cooperators.
 Like in the model~\cite{czuppon_pfaffelhuber_2017}, the parameter~$\rd$ is the additional birth rate for defectors resulting from the energy they save from
 not cooperating while the parameter~$\rc$ measures the benefit a cooperator obtains from nearby cooperators. \\


\noindent {\bf The mean-field model}.
 Before studying the spatially explicit, stochastic model, we look at its deterministic non-spatial version called mean-field model~\cite{durrett_levin_1994}.
 That is, we assume that all sites are independent, and the system spatially homogeneous.
 This results in a system of coupled differential equations for the densities of cooperators and defectors.
 The main reason for studying this model is to compare later its behavior with the behavior of the interacting particle system in order to understand
 the effect of local interactions on the dynamics.
 Letting~$\uc$ denote the density of cooperators, and~$\ud$ the density of defectors, the mean-field model is described by
\begin{equation}
\label{eq:mean-field}
  \begin{array}{rcl}
    \uc' & \n = \n & (\b + \rc \,\uc)(1 - \uc - \ud) \,\uc - \uc \vspace*{2pt} \\
    \ud' & \n = \n & (\b + \rd)(1 - \uc - \ud) \,\ud - \ud. \end{array}
\end{equation}
 Letting also~$\ue$ be the density of empty sites, the simplex
 $$ \simplex = \{(\uc, \ud, \ue) : \uc, \ud, \ue \geq 0 \ \hbox{and} \ \uc + \ud + \ue = 1 \}, $$
 that corresponds to the set of vectors that are physically relevant, is positive invariant under the dynamics of~\eqref{eq:mean-field}.
 In particular, we are only interested in the model with initial conditions in the simplex, and the fixed points in the simplex.
 When~$\rc = \rd = 0$, the population survives in the sense that the extinction fixed point~$(0, 0, 1)$ is not the unique fixed point
 if and only if~$\b > 1$.
 Because our main objective is to study the competition between cooperators and defectors rather than whether a given type survives in the
 absence of the other type, we only study the mean-field model with~$\b > 1$.
 The following theorem describes the behavior in this case.
\begin{theorem}[mean-field model] --
\label{th:mean-field}
 Let~$\b > 1$ and
 $$ \phi (\rc) = \frac{2 \rc}{\rc + \b - \sqrt{(\rc + \b)^2 - 4 \rc}} - \b \quad \hbox{for all} \quad \rc > 0, $$
 and assume that~$\uc (0), \ud (0) > 0$. Then, we have the following.
\begin{itemize}
 \item For all~$\rd > \phi (\rc)$, the defectors win:
       $$ \begin{array}{l} \lim_{t \to \infty} \,(\uc (t), \ud (t)) =
          \displaystyle \left(0, 1 - \frac{1}{\b + \rd} \right). \end{array} $$
 \item For all~$\rd < \phi (\rc)$, the system is bistable: depending on the initial densities,
       $$ \begin{array}{l} \lim_{t \to \infty} \,(\uc (t), \ud (t)) =
          \displaystyle \left(0, 1 - \frac{1}{\b + \rd} \right) \ \hbox{or} \
          \left(\frac{\rc - \b + \sqrt{(\rc + \b)^2 - 4 \rc}}{2 \rc}, 0 \right). \end{array} $$
 \item The function~$\phi$ is nondecreasing,
       $$ \begin{array}{l} \phi (\rc) < \rc \ \hbox{for all} \ \rc > 0 \quad \hbox{and} \quad \lim_{\rc \to 0} \phi (\rc) = 0. \end{array} $$
\end{itemize}
\end{theorem}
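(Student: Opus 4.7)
The plan is to carry out a phase-plane analysis of~\eqref{eq:mean-field} on the compact, forward invariant simplex~$\simplex$, combined with the Poincar\'e--Bendixson theorem and the Poincar\'e index theorem to rule out periodic behavior.

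Setting $\uc' = \ud' = 0$ in~\eqref{eq:mean-field}, the boundary fixed points are $(0,0)$, $(0, 1 - 1/(\b + \rd))$ and $(\uc^*, 0)$, where $\uc^*$ is the positive root of $\rc \uc^2 + (\b - \rc)\uc + 1 - \b = 0$; rationalizing the defining expression for $\phi$ produces the useful identity $\phi(\rc) = (\rc - \b + \sqrt{(\rc + \b)^2 - 4\rc})/2 = \rc \uc^*$. An interior fixed point must satisfy $(\b + \rd)(1 - \uc - \ud) = (\b + \rc \uc)(1 - \uc - \ud) = 1$, which forces $\uc = \rd/\rc$ and $\ud = 1 - \rd/\rc - 1/(\b + \rd)$, so such a point lies in $\simplex$ precisely when $\rd \leq \phi(\rc)$ and merges with $(\uc^*, 0)$ at the threshold. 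Linearization shows that the pure defector point is always asymptotically stable (eigenvalues $-\rd/(\b + \rd)$ and $1 - \b - \rd$, both negative since $\b > 1$), that $(\uc^*, 0)$ has an upper triangular Jacobian with one eigenvalue equal to $(\rd - \phi(\rc))/(\b + \phi(\rc))$ (changing sign at $\rd = \phi(\rc)$) and the other strictly negative, and that the interior fixed point, when present, has $\det J < 0$ and is therefore a hyperbolic saddle.

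To lift this local picture to global convergence, I use two observations. First, the ratio $r := \uc/\ud$ satisfies the clean identity $r' = r\,(1 - \uc - \ud)(\rc \uc - \rd)$, and $\uc' \leq [(\b + \rc\uc)(1 - \uc) - 1]\uc$ makes $\{\uc \leq \uc^*\}$ forward invariant and globally attracting; so in the regime $\rd > \phi(\rc)$, where $\rc \uc^* = \phi(\rc) < \rd$, the ratio $r$ is eventually nonincreasing. Second, any closed orbit in the interior of $\simplex$ would, by the Poincar\'e index theorem, enclose interior fixed points whose indices sum to $+1$: in the regime $\rd > \phi(\rc)$ there are none, and in the regime $\rd < \phi(\rc)$ the only interior fixed point is a saddle of index $-1$, so no closed orbit exists in either case. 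Poincar\'e--Bendixson then forces every interior $\omega$-limit set to be a fixed point. Combined with the stability analysis and the invariance of the boundary (which rules out convergence to the unstable extinction point from the interior), this yields convergence to the pure defector point when $\rd > \phi(\rc)$, and bistability separated by the one-dimensional stable manifold of the saddle when $\rd < \phi(\rc)$. The step I expect to require the most care is identifying this stable manifold as the full boundary between the two basins of attraction; I plan to establish it by combining the stable/unstable manifold theorem at the saddle with the monotonicity of $r$ on either side of $\uc = \rd/\rc$.

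Finally, the three properties of $\phi$ follow from the identity $\phi(\rc) = \rc \uc^*$: implicit differentiation of $\rc \uc^{*2} + (\b - \rc)\uc^* + 1 - \b = 0$ yields $\partial_{\rc} \uc^* = \uc^*(1 - \uc^*)/\sqrt{(\rc + \b)^2 - 4\rc} > 0$, hence monotonicity of $\phi$; the inequality $\phi(\rc) < \rc$ reduces to $\uc^* < 1$, equivalent to $\sqrt{(\rc + \b)^2 - 4\rc} < \rc + \b$; and the closed form makes $\phi(\rc) \to 0$ as $\rc \to 0$ immediate.
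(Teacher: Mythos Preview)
Your approach is sound and differs from the paper's in several respects. To rule out periodic orbits the paper applies the Bendixson--Dulac criterion with the Dulac function $h(\uc,\ud)=1/(\uc^2\ud)$, while you invoke the Poincar\'e index theorem; both work, though Dulac disposes of homoclinic loops in the same stroke, whereas your route needs a brief word on heteroclinic/homoclinic cycles (straightforward here since both boundary attractors are sinks and the origin is a source). More notably, you correctly locate an interior saddle at $(\rd/\rc,\,1-\rd/\rc-1/(\b+\rd))$ in the bistable regime; the paper asserts there is no interior fixed point, but that computation contains a slip (the denominator $\b+\rd$ becomes $\b+\rc$), so your analysis is actually the more accurate one, and the saddle's stable manifold is exactly the separatrix you want for bistability. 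Finally, your identity $\phi(\rc)=\rc\,\uc^*$ and the ratio equation $r'=r(1-\uc-\ud)(\rc\uc-\rd)$ are cleaner than the paper's direct manipulations: they make the monotonicity of $\phi$, the bound $\phi(\rc)<\rc$, the limit $\phi(\rc)\to 0$, and the global convergence when $\rd>\phi(\rc)$ essentially immediate, whereas the paper checks each of these via heavier explicit calculations with $\sqrt{(\rc+\b)^2-4\rc}$.
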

 The theorem gives the following picture of the non-spatial deterministic mean-field model with three main properties that will also
 be studied for the stochastic model.
\begin{itemize}
 \item Regardless of the parameters~$\rc$ and~$\rd$, the cooperators cannot invade the defectors in their equilibrium.
       In contrast, the defectors can invade the cooperators, which splits the~$(\rc, \rd)$ plane into two regions:
       one where the defectors always win when starting at a positive density and one where the system is bistable and the winner
       depends on the initial densities. \vspace*{2pt}
 \item The system is monotone in that the two parameter regions are separated by a transition curve described by~$\phi$ that can be viewed
       as both the graph of a nondecreasing function of~$\rc$ and the graph of a nondecreasing function of~$\rd$.
       In words, the parameter~$\rc$ helps the cooperators whereas the parameter~$\rd$ helps the defectors. \vspace*{2pt}
 \item The defectors always win when~$\rd > \rc$ and there is a parameter region where, even if~$\rd < \rc$, the defectors
       again out-compete the cooperators.
\end{itemize}
 This is summarized in the phase diagram on the left-hand side of Figure~\ref{fig:diagram}. \\


\noindent {\bf The stochastic process}.
 Before stating our results, we give some simple observations and a couple of definitions.
 Starting from a translation invariant measure, we say that
\begin{itemize}
 \item the cooperators {\bf die out} when~$P (\xi_t (x) = \c \ \hbox{i.o.}) = 0$, \vspace*{2pt}
 \item the cooperators {\bf survive} in the long run when~$P (\xi_t (x) = \c \ \hbox{i.o.}) = 1$,
\end{itemize}
 where i.o. stands for infinitely often, i.e.,
 $$ \{\xi_t (x) = \c \ \hbox{i.o} \} = \bigcap_{t > 0} \ \bigcup_{s > t} \ \{\xi_s (x) = \c \}. $$
 Death and long-term survival of the defectors are defined similarly by replacing state~$\c$ by state~$\d$ in the events above.
 Then, we say that
\begin{itemize}
 \item the cooperators {\bf win} when the cooperators survive but the defectors die out, \vspace*{2pt}
 \item the defectors {\bf win} when the defectors survive but the cooperators die out, \vspace*{2pt}
 \item {\bf coexistence} occurs when both the cooperators and the defectors survive.
\end{itemize}
 In the absence of cooperators, defectors evolve according to the basic contact process with birth parameter~$\b + \rd$.
 In this case, it is known from~\cite{bezuidenhout_grimmett_1990} that there exists a critical value~$\b_* \in (0, \infty)$ such that,
 starting with a positive density of defectors,
 $$ \begin{array}{rcl}
    \hbox{the defectors survive} & \hbox{if and only if} & \b + \rd > \b_*. \end{array} $$
 In the absence of defectors, the evolution of the cooperators is more complicated, but the process can be coupled with the contact processes
 with parameters~$\b$ and~$\b + \rc$, respectively, to prove that, starting with a positive density of cooperators,
 $$ \begin{array}{rcl}
    \hbox{the cooperators survive} & \hbox{whenever} & \b > \b_* \vspace*{2pt} \\
    \hbox{the cooperators die out} & \hbox{whenever} & \b + \rc \leq \b_*. \end{array} $$
 Because the main objective of this paper is to study the competition between cooperators and defectors, we assume from now on that the process
 starts from a translation invariant product measure with a positive density of cooperators and a positive density of defectors.
 To prevent extinction, we further assume that~$\b > \b_*$.
 Using a coupling argument to compare processes with different birth parameters, we first prove the following monotonicity result.
\begin{theorem}[monotonicity] --
\label{th:monotonicity}
 For all~$\b$, $\xi_0$ and~$\{\is, \js \} = \{\c, \d \}$,
 $$ \begin{array}{c}
      P (\xi_t (x) = \is \ \hbox{i.o.}) \ \hbox{is nondecreasing in~$\ir$ and nonincreasing in~$\jr$}. \end{array} $$
\end{theorem}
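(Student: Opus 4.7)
The strategy is a standard monotone coupling constructed from a graphical representation of the process. I would associate, on a common probability space, independent Poisson processes as follows: on each ordered neighbor pair $(y, x)$, a rate $\b/(2d)$ process of basic cooperator birth attempts (at each mark, flip $x$ to~$\c$ provided $\xi(y) = \c$ and $\xi(x) = \e$) and a rate $(\b + \rd)/(2d)$ process of defector birth attempts (flip $x$ to~$\d$ provided $\xi(y) = \d$ and $\xi(x) = \e$); on each ordered triple $(y, x, z)$ with $z \sim y \sim x$, a rate $\rc/(4d^2)$ process of boosted cooperator birth attempts (like a basic birth, but additionally requiring $\xi(z) = \c$); and at each site $x$, a rate-$1$ death Poisson process that sets $\xi(x) = \e$. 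Standard arguments (see~\cite{liggett_1985}) show that the resulting process has generator~\eqref{eq:ips}.

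To establish monotonicity, I endow $S = \{\c, \d, \e\}$ with the total order $\d < \e < \c$ and extend it coordinate-wise to $S^{\Z^d}$. For monotonicity in~$\rc$, fix $\rc^1 < \rc^2$ (with $\b$ and $\rd$ common) and define coupled processes $\xi^1, \xi^2$ starting from the same configuration that share the death, basic cooperator birth, and defector birth Poisson processes. For the boosted cooperator family, use the superposition coupling in which $\xi^2$ receives the marks of $\xi^1$ together with additional independent marks of rate $(\rc^2 - \rc^1)/(4d^2)$ at each triple. The central claim is then that $\xi^1_t \leq \xi^2_t$ for all $t \geq 0$, which I would verify transition by transition. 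Deaths trivially preserve the order. Cooperator birth marks (basic or boosted) only push a site upward from $\e$ to $\c$, and because $\c$ is the maximum any flip in $\xi^1$ has its triggering conditions automatically met in $\xi^2$, while the extra boosted marks seen by $\xi^2$ alone can only push $\xi^2$ further up. A defector birth is a downward move, but because $\d$ is the minimum, $\xi^1(y) = \d$ forces $\xi^2(y) = \d$, and a short case analysis on $\xi^1(x)$ and $\xi^2(x)$ shows the order is preserved. An entirely symmetric coupling, now giving the extra defector birth marks to the process with the larger $\rd$, handles monotonicity in~$\rd$ and yields the reversed inequality $\xi^2_t \leq \xi^1_t$.

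Finally, from the pointwise order $\xi^1_t \leq \xi^2_t$, the inclusion $\{\xi^1_t(x) = \c \text{ i.o.}\} \subseteq \{\xi^2_t(x) = \c \text{ i.o.}\}$ is immediate since $\c$ is the top, and $\{\xi^2_t(x) = \d \text{ i.o.}\} \subseteq \{\xi^1_t(x) = \d \text{ i.o.}\}$ since $\d$ is the bottom. Combining these inclusions with the two couplings yields the four desired monotonicities. The only mildly delicate step is the case-by-case verification that the defector birth — a ``downward'' transition — is compatible with the chosen order; this works precisely because $\d$ is minimal in the order, which aligns the firing conditions of the two coupled processes whenever the smaller one fires.
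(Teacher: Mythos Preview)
Your approach is essentially the paper's: both build a monotone coupling on a common graphical representation, and your total order $\d < \e < \c$ is precisely the partial order that encodes the paper's pair of set inclusions $\{\xi^1 = \c\} \subset \{\xi^2 = \c\}$ and $\{\xi^1 = \d\} \supset \{\xi^2 = \d\}$. The paper packages both parameters into a single coupling (Lemma~\ref{lem:coupling}) and uses a shared rate-$\b/(2d)$ arrow that either type can traverse, whereas you separate the basic births by type and treat $\rc$ and $\rd$ one at a time; these are cosmetic differences.

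There is, however, a slip in your defector-birth case. You write that ``$\xi^1(y) = \d$ forces $\xi^2(y) = \d$'', but under $\xi^1 \le \xi^2$ this implication goes the wrong way: $\xi^1(y) = \d$ only gives $\xi^2(y) \ge \d$, which is vacuous. What you actually need (and what does hold) is the reverse: if the defector birth fires in $\xi^2$ then $\xi^2(y) = \d$, hence $\xi^1(y) \le \d$ forces $\xi^1(y) = \d$, and since $\xi^1(x) \le \xi^2(x) = \e$ either $\xi^1(x) = \d$ already or $\xi^1$ fires too. The case where only $\xi^1$ fires is trivial because $\d$ is the minimum. With this correction the argument goes through.
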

 To understand the competition between cooperators and defectors, we fix~$\b > \b_*$ and~$\rd > 0$, and study the values of~$\rc$ for
 which the cooperators die out, survive or win.
 This and the previous monotonicity result motivate the introduction of the critical values
 $$ \begin{array}{rcl}
    \rc^- (\b, \rd) & \n = \n & \inf \,\{\rc : \hbox{the cooperators survive} \} \vspace*{4pt} \\
    \rc^+ (\b, \rd) & \n = \n & \inf \,\{\rc : \hbox{the cooperators win} \} \end{array} $$
 for the process with parameters~$\b > \b_*$ and~$\rd > 0$ starting from a translation invariant product measure with a positive density
 of cooperators and a positive density of defectors.
 Applying the monotonicity result Theorem~\ref{th:monotonicity} with~$\is = \c$ and~$\js = \d$ implies that
\begin{equation}
\label{eq:monotone-c}
  \rc^- (\b, \rd) \ \hbox{and} \ \rc^+ (\b, \rd) \ \hbox{are nondecreasing with respect to~$\rd$},
\end{equation}
 while applying the result for~$\is = \d$ and~$\js = \c$ implies that
\begin{equation}
\label{eq:monotone-d}
  \begin{array}{rcl}
                   \hbox{the defectors win} & \hbox{for all} & \rc \in [0, \rc^-) \vspace*{4pt} \\
   \hbox{cooperators and defectors coexist} & \hbox{for all} & \rc \in (\rc^-, \rc^+) \vspace*{4pt} \\
                 \hbox{the cooperators win} & \hbox{for all} & \rc \in (\rc^+, \infty). \end{array}
\end{equation}
 The main result of this paper states that, for all~$\b$ and~$\rd$, the cooperators can win provided the benefit from cooperation~$\rc$
 is sufficiently large.
 In addition, for the cooperators to survive, the benefit~$\rc$ must be larger than a value larger than~$2d \rd / (2d - 1)$.
 This can be expressed in terms of lower and upper bounds for the two critical values introduced above.
\begin{theorem}[critical values] --
\label{th:critical}
 For all~$\b > \b_*$ and~$\rd > 0$,
 $$ \rd < 2d \rd / (2d - 1) < \rc^- (\b, \rd) \leq \rc^+ (\b, \rd) < \infty. $$
\end{theorem}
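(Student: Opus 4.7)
The inequality $\rd < 2d\rd/(2d-1)$ is immediate since $2d > 2d-1$, and $\rc^- \leq \rc^+$ follows directly from the definitions together with the regime description~\eqref{eq:monotone-d}. The real content lies in the lower bound $2d\rd/(2d-1) < \rc^-$ and the upper bound $\rc^+ < \infty$.

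For the lower bound, my plan is a per-edge rate comparison using the graphical representation of~\eqref{eq:ips}. The rate at which a cooperator parent at $y$ sends an offspring to a given empty neighbor $x$ equals
$$ \frac{\b}{2d} + \sum_{z \sim y} \frac{\rc \, \ind \{\xi (z) = \c \}}{4d^2}. $$
Because $x$ is itself one of $y$'s $2d$ neighbors and $\xi(x) = \e$, at most $2d - 1$ of the neighbors $z$ of $y$ can be cooperators, so this per-edge rate is bounded above by $\b/(2d) + (2d-1)\rc/(4d^2)$, while the defector's per-edge rate is exactly $(\b + \rd)/(2d)$. The cooperator upper bound is less than or equal to the defector rate precisely when $\rc \leq 2d\rd/(2d-1)$. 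I would then construct on the Harris graphical representation a coupling of $\xi_t$ with an auxiliary two-type contact process $\eta_t$ having fictitious ``cooperators'' of per-parent rate $\b + (2d-1)\rc/(2d)$ and defectors of per-parent rate $\b + \rd$, in such a way that the cooperator sites of $\xi_t$ are always contained in the $A$-sites of $\eta_t$. A standard competition argument for two-type contact processes (via coupling with the two marginal contact processes in the Harris construction) then shows that the slower type is out-competed, so cooperators in $\xi_t$ die out for $\rc \leq 2d\rd/(2d-1)$. To upgrade this to the \emph{strict} inequality $\rc^- > 2d\rd/(2d-1)$, I would exploit the fact that the bound ``$2d-1$ cooperator neighbors'' is saturated only when every non-target neighbor of the parent is a cooperator, which has density bounded below $1$; replacing the worst-case count by a slightly smaller effective count pushes the extinction region to $[0, 2d\rd/(2d-1) + \varepsilon]$ for some $\varepsilon > 0$.

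For $\rc^+ < \infty$, my plan is a block construction in the style of Bezuidenhout--Grimmett~\cite{bezuidenhout_grimmett_1990} ending in a comparison with supercritical oriented site percolation. Fix a large spatial scale $L$ and a time horizon $T$. I would show that if $\rc$ is taken sufficiently large (depending on $\b$, $\rd$, $L$, $T$, and $d$), then starting from a configuration with a small cluster of adjacent cooperators inside a spatial block $B$, with probability close to one the process by time $T$ produces translates of the same good configuration in the $2d$ spatial neighbor blocks of $B$, with no defector surviving inside those blocks. The mechanism is that two adjacent cooperators already boost each other's birth rate to at least $\b + \rc/(2d)$, which for large $\rc$ dwarfs the defector rate $\b + \rd$, so the cooperator cluster expands ballistically at a speed strictly exceeding any defector invasion speed, while deaths at rate $1$ steadily clear defectors from the interior of the expanding cluster. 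A restart argument then makes the good-event probability $1 - \epsilon$ for $\epsilon$ arbitrarily small, so the collection of cooperator-rich, defector-free blocks stochastically dominates a supercritical site percolation, yielding both cooperator survival and defector extinction.

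The main obstacle is the defector-eviction step in the upper bound. Cooperator survival for large $\rc$ is the easy part: a seed of two adjacent cooperators already grows at an effective rate far exceeding the defector's, so standard cluster-expansion arguments yield rapid spread. The delicate part is arguing that within the expanding cooperator region, defector births (which occur at rate $\b + \rd$ into any empty site adjacent to a defector, independent of the surrounding cooperator density) are actually overwhelmed by cooperator refills and rate-one deaths. This requires a careful joint tuning of $L$, $T$, and $\rc$ so that the ballistic cooperator front together with the rate-one deaths drive the defector population in the restart regions to zero uniformly in the initial defector configuration inside $B$; this coordination, rather than the cooperator spread itself, is the crux of the block construction.
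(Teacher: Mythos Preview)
Your upper-bound plan is essentially the paper's: a block construction in which a seed of adjacent cooperators, once~$\rc$ is large, reproduces fast enough to fill and hold a fixed box while rate-one deaths clear defectors, followed by comparison with supercritical oriented percolation and a dry-sites argument for defector extinction. The paper uses boxes of side~$3$ rather than a large~$L$, but the mechanism and the obstacle you flag (defector eviction) are the same.

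Your key observation for the lower bound is also the paper's: since the target site~$x$ is empty, at most~$2d-1$ of the~$2d$ neighbours of the parent~$y$ can be cooperators, so the per-edge cooperator rate is at most~$\b/(2d) + (2d-1)\rc/(4d^2)$. This is exactly the content of the paper's Lemma~\ref{lem:remove-arrow}, phrased there as ``the~$\c$-arrows (those with~$z=x$) have no effect''. Coupling with a multitype contact process then gives the weak inequality~$\rc^- \geq 2d\rd/(2d-1)$.

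The gap is in your upgrade to the \emph{strict} inequality. The sentence ``the bound is saturated only when every non-target neighbour of the parent is a cooperator, which has density bounded below~$1$; replacing the worst-case count by a slightly smaller effective count'' is not a proof. The per-edge rate really can equal~$\b/(2d) + (2d-1)\rc/(4d^2)$ at specific space-time points, so there is no uniform rate reduction and hence no pathwise coupling with a multitype contact process in which the cooperator rate is strictly below~$\b+\rd$. A density statement about how often saturation occurs does not yield stochastic domination.

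What the paper does instead is more delicate. Working exactly at~$\rc = 2d\rd/(2d-1)$, it couples the cooperator and defector births through a common family of dot-arrows and then identifies, beyond the~$z=x$ arrows, a further positive-density set of \emph{sterile} dot-arrows---those for which the helper site~$z$ was hit by a death mark more recently than by any incoming arrow---through which cooperators provably cannot give birth (Lemmas~\ref{lem:proba-sterile}--\ref{lem:effect-sterile}). These sterile arrows are usable by defectors but not by cooperators, giving defectors a strict advantage at the critical parameter. This advantage is then fed into the Neuhauser/Durrett--Neuhauser duality and block-construction machinery for the multitype contact process (Proposition~\ref{prop:block}) to show that defectors win at~$\rc = 2d\rd/(2d-1)$, and finally a perturbation argument (Lemma~\ref{lem:perturbation}) pushes this to~$\rc$ slightly larger. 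Your proposal does not contain any substitute for the sterile-arrow mechanism or for the block-construction-plus-perturbation step that converts ``defectors win at the boundary value'' into ``defectors win in an open neighbourhood of it''.
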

 Note that~$\rc^- > 2 \rd$ in one dimension, which contrasts with the model from~\cite[remark~3]{czuppon_pfaffelhuber_2017} for which
 the critical value is equal to~$2 \rd$.
 This means that the benefit from cooperation needs to be larger for the cooperators to survive in the presence of deaths, i.e., using the multi-type
 contact process rather than the biased voter model.
 Finally, we observe that, when~$\rc = \rd = 0$, the process reduces to the multi-type contact process in which both types of particles have the same birth rate~$\b$,
 and the results from~\cite{neuhauser_1992} imply that cooperators and defectors coexist in the sense that
 $$ P (\{\xi_t (x) = \c \ \hbox{i.o.} \} \cap \{\xi_t (x) = \d \ \hbox{i.o.} \}) = 1 $$
 though there is no stationary distribution with a positive density of both types in one and two dimensions.
 This implies that
\begin{equation}
\label{eq:multitype}
  \rc^- (\b, 0) = 0 \quad \hbox{for all} \quad \b > \b_*.
\end{equation}
 Based on previous results for the biased voter model~\cite{bramson_griffeath_1980, bramson_griffeath_1981} and the multi-type contact
 process~\cite{neuhauser_1992}, we also conjecture that coexistence cannot occur in an open set of the parameter space, meaning that both critical values are equal.
 Combining this conjecture with our results, Theorems~\ref{th:monotonicity} and~\ref{th:critical} and~\eqref{eq:monotone-c}--\eqref{eq:multitype}, gives
 the phase diagram on the right-hand side of Figure~\ref{fig:diagram}. \\

\begin{figure}[t]
\centering
\scalebox{0.60}{\input{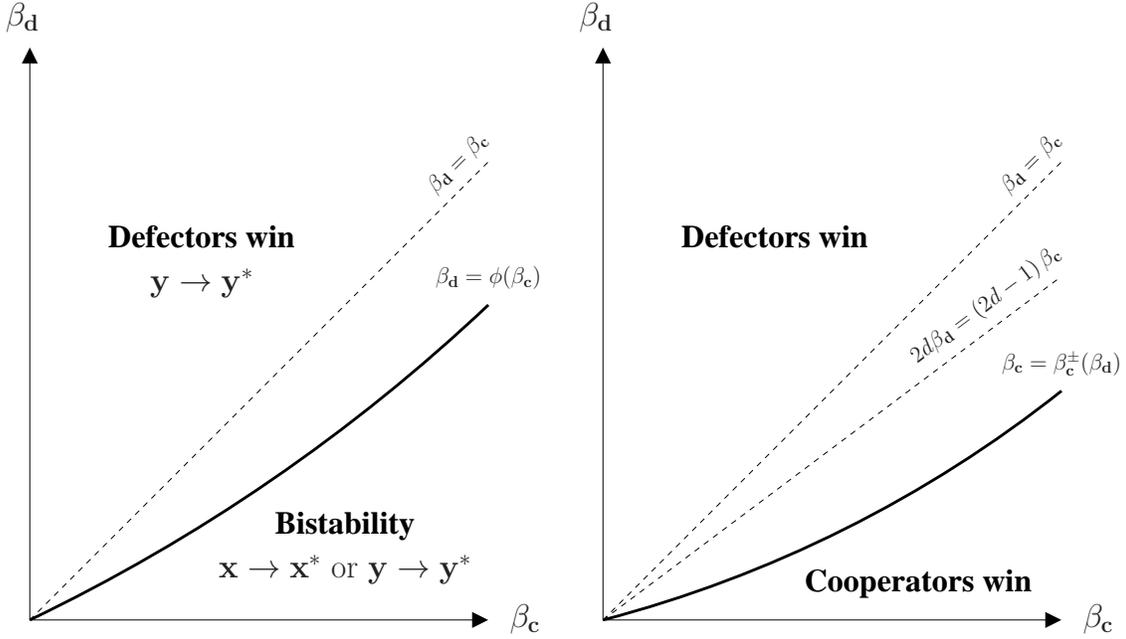}}
\caption{\upshape Phase diagram of the mean-field model on the left and phase diagram of the interacting particle system on the right in the~$(\rc, \rd)$ plane when~$\b > 1$.}
\label{fig:diagram}
\end{figure}


\noindent {\bf Effect of local interactions}.
 The spatial model~\eqref{eq:ips} and the non-spatial model~\eqref{eq:mean-field} exhibit the same monotonicity property with respect to the parameters~$\rc$
 and~$\rd$, which results in the uniqueness of a phase transition for both models.
 In addition, for both models, the defectors always win when~$\rd > \rc$ and there is a parameter region where, even if~$\rd < \rc$, the defectors
 again out-compete the cooperators.
 The two models, however, exhibit a major difference.
 In the spatial model, when cooperation is strong enough, the cooperators win starting at arbitrarily low density.
 In contrast, for all~$\rc$, the cooperators go extinct in the non-spatial model if their initial density is too low.
 In particular, for sufficiently large~$\rc$, the cooperators can invade the defectors in their equilibrium in the presence of local interactions
 but not in the absence of space, a property that is summarized in the title of this paper: Local interactions promote cooperation.


\section{The mean-field model}
\label{sec:mean-field}

\indent In this section, we study the mean-field model~\eqref{eq:mean-field} and prove Theorem~\ref{th:mean-field}.
 Recall that the mean-field model is given by the system of coupled ordinary differential equations
 $$ \begin{array}{rcccl}
     \uc' & \n = \n & f (\uc, \ud) & \n = \n & (\b + \rc \,\uc)(1 - \uc - \ud) \,\uc - \uc \vspace*{2pt} \\
     \ud' & \n = \n & g (\uc, \ud) & \n = \n & (\b + \rd)(1 - \uc - \ud) \,\ud - \ud \end{array} $$
 where~$\uc$ and~$\ud$ denote the frequencies of cooperators and defectors, respectively.
 The proof of the theorem is organized in three parts.
 First, we identify the nontrivial boundary fixed points and study their stability.
 Then, we prove that the system does not have any interior fixed point nor periodic solution in the simplex.
 Finally, we study the properties of the function~$\phi$ stated in the theorem.
 Throughout this section, we assume that~$\b > 1$. \\
 

\noindent {\bf Nontrivial boundary fixed points}.
 Setting~$\uc = 0$ and~$\ud' = 0$ gives
\begin{equation}
\label{eq:ODE-1}
  \ud = 0 \quad \hbox{or} \quad \ud = \ud^* = 1 - \frac{1}{\b + \rd}.
\end{equation}
 Then, assuming that~$\ud = \ud^*$ and~$\uc > 0$ is small, we get
\begin{equation}
\label{eq:ODE-2}
  \uc' = \b \,(1 - \ud^*) \,\uc - \uc + o (\uc) = \left(\frac{\b}{\b + \rd} - 1 \right) \uc + o (\uc) < 0
\end{equation}
 showing that, regardless of the value of~$\rc$, the defector fixed point~$(0, \ud^*, 1 - \ud^*)$ is locally stable:
 the cooperators cannot invade the defectors in their equilibrium.
 Turning to the other boundary, we set~$\ud = 0$ and~$\uc' = 0$, which gives
 $$ \uc = 0 \quad \hbox{or} \quad \uc' = (\b + \rc \,\uc)(1 - \uc) - 1 = - \rc \,\uc^2 + (\rc - \b) \,\uc + \b - 1 = 0. $$
 Thinking of~$\uc'$ as a polynomial in~$\uc$, the discriminant is
 $$ \begin{array}{rcl}
    \Delta & \n = \n & (\rc - \b)^2 + 4 (\b - 1) \,\rc = (\rc + \b)^2 - 4 \rc \vspace*{2pt} \\
           & \n > \n & (\rc + 1)^2 - 4 \rc = (\rc - 1)^2 \geq 0. \end{array} $$
 This gives two distinct roots:
\begin{equation}
\label{eq:ODE-3}
   \uc_* = \frac{\rc - \b - \sqrt{(\rc + \b)^2 - 4 \rc}}{2 \rc} \quad \hbox{and} \quad
   \uc^* = \frac{\rc - \b + \sqrt{(\rc + \b)^2 - 4 \rc}}{2 \rc}.
\end{equation}
 The fixed point~$(\uc_*, 0, 1 - \uc_*)$ is not physically relevant because
 $$ 2 \rc \,\uc_* \leq \rc - \b - \sqrt{(\rc - 1)^2} =
      \left\{\begin{array}{lcl} 2 \rc - \b - 1 < 0 & \hbox{for} & \rc \leq 1 \vspace*{2pt} \\
                                - \b + 1 < 0       & \hbox{for} & \rc \geq 1, \end{array} \right. $$
 indicating that, starting from an initial condition in the simplex, the density of cooperators~$\uc$ cannot converge to~$\uc_*$.
 In contrast, for all~$\b > 1$ and~$\rc > 0$,
 $$ \begin{array}{rcl}
      (\rc - \b)^2 - \Delta = (\rc - \b)^2 - (\rc + \b)^2 + 4 \rc & \n = \n & 4 (1 - \b) \,\rc < 0 \vspace*{2pt} \\
      (\rc + \b)^2 - \Delta = (\rc + \b)^2 - (\rc + \b)^2 + 4 \rc & \n = \n & 4 \rc > 0 \end{array} $$
 from which it respectively follows that
 $$ \begin{array}{rcl}
    \rc - \b + \sqrt{\Delta} > 0 & \hbox{and} & \uc^* > 0 \vspace*{2pt} \\
    \rc + \b - \sqrt{\Delta} > 0 & \hbox{and} & 2 \rc (\uc^* - 1) = - \rc - \b + \sqrt{\Delta} < 0. \end{array} $$
 In particular, $\uc^* \in (0, 1)$, therefore~$(\uc^*, 0, 1 - \uc^*)$ is in the simplex.
 To study the local stability of the cooperator fixed point, we observe that, when~$\uc = \uc^*$ and~$\ud > 0$ is small,
 $$ \ud' = (\b + \rd)(1 - \uc^*) \,\ud - \ud + o (\ud) $$
 which is positive if
\begin{equation}
\label{eq:ODE-4}
  \rd > \frac{1}{1 - \uc^*} - \b = \frac{2 \rc}{\rc + \b - \sqrt{(\rc + \b)^2 - 4 \rc}} - \b = \phi (\rc),
\end{equation}
 and negative if~$\rd < \phi (\rc)$. \\


\noindent {\bf Interior fixed point and periodic solution}.
 To deduce the first two parts of the theorem from the existence and stability of the two boundary fixed points above~\eqref{eq:ODE-1}--\eqref{eq:ODE-4},
 we still need to prove that the system does not have any interior fixed point nor periodic solution in the simplex.
 To prove the lack of interior fixed point, note that~$\ud \neq 0$ and~$\ud' = 0$ imply
 $$ (\b + \rd)(1 - \uc - \ud) = 1 \quad \hbox{and} \quad \ud = 1 - \uc - \frac{1}{\b + \rd}. $$
 Assuming also that~$\uc \neq 0$ and~$\uc' = 0$ gives
 $$ (\b + \rc \,\uc) \left(1 - \uc - \left(1 - \uc - \frac{1}{\b + \rd} \right) \right) = \frac{\b + \rc \,\uc}{\b + \rc} = 1 $$
 therefore~$\uc = 1$ and~$\ud < 0$.
 In particular, all the fixed points such that~$\uc \ud \neq 0$ cannot be contained in the simplex, showing that there is no interior fixed point. \\
\indent To prove the lack of periodic solution in the simplex, we use the Bendixson-Dulac theorem.
 In particular, it suffices to find a so-called Dulac function~$h (\uc, \ud)$ such that
\begin{equation}
\label{eq:ODE-5}
  \sign \left(\frac{\partial (hf)}{\partial \uc} + \frac{\partial (hg)}{\partial \ud} \right) \neq 0 \ \hbox{and is constant}
\end{equation}
 almost everywhere in the simplex.
 Taking for instance~$h (\uc, \ud) = 1 / \uc^2 \ud$, we get
 $$ \begin{array}{rcl}
    \displaystyle \frac{\partial (hf)}{\partial \uc} & \n = \n &
    \displaystyle \frac{1}{\ud} \ \frac{\partial}{\partial \uc} \left[\left(\frac{\b}{\uc} + \rc \right)(1 - \uc - \ud) - \frac{1}{\uc} \right] \vspace*{8pt} \\ & \n = \n &
    \displaystyle \frac{1}{\ud} \left[\left(- \frac{\b}{\uc^2} \right) (1 - \uc - \ud) - \left(\frac{\b}{\uc} + \rc \right) + \frac{1}{\uc^2} \right] =
    \displaystyle \frac{\b (\ud - 1) - \rc \,\uc^2 + 1}{\uc^2 \ud}. \end{array} $$
 Some basic algebra also gives
 $$ \frac{\partial (hg)}{\partial \ud} =
    \frac{1}{\uc^2} \ \frac{\partial}{\partial \ud} \left[(\b + \rd)(1 - \uc - \ud) - 1 \right] = - \frac{\b + \rd}{\uc^2}. $$
 Recalling that~$\b > 1$, we deduce
 $$ \begin{array}{rcl}
    \displaystyle \frac{\partial (hf)}{\partial \uc} + \frac{\partial (hg)}{\partial \ud} & \n = \n &
    \displaystyle \frac{\b (\ud - 1) - \rc \,\uc^2 + 1 - (\b + \rd) \,\ud}{\uc^2 \ud} \vspace*{4pt} \\ & \n = \n &
    \displaystyle - \frac{\rc \,\uc^2 + \rd \,\ud + (\b - 1)}{\uc^2 \ud} < 0 \end{array} $$
 for all~$(\uc, \ud, 1 - \uc - \ud)$ in the interior of the simplex, therefore~\eqref{eq:ODE-5} holds. \\


\noindent {\bf The transition curve}.
 To complete the proof of Theorem~\ref{th:mean-field}, the last step is to study the function~$\phi$ representing the critical curve between the two regimes:
 $$ \phi (\rc) = \frac{2 \rc}{\rc + \b - \sqrt{\Delta}} - \b = \frac{2 \rc}{\rc + \b - \sqrt{(\rc + \b)^2 - 4 \rc}} - \b. $$
 A direct calculation gives
 $$ \begin{array}{l}
     (\b \sqrt{\Delta})^2 - (\Delta - \rc \,(\rc + \b - 2))^2 \vspace*{4pt} \\ \hspace*{20pt} =
      \b^2 \,((\rc + \b)^2 - 4 \rc) - ((\rc + \b)^2 - 4 \rc - \rc \,(\rc + \b - 2))^2 \vspace*{4pt} \\ \hspace*{20pt} =
      \b^2 \,((\rc + \b)^2 - 4 \rc) - (\b \rc + \b^2 - 2 \rc)^2 = 4 (\b - 1) \,\rc^2 \end{array} $$
 from which it follows that, for all~$\b > 1$,
 $$ \frac{\partial \uc^*}{\partial \rc} = \frac{\rc \,(\rc + \b - 2) + \b \sqrt{\Delta} - \Delta}{2 \sqrt{\Delta} \,\rc^2} > 0 $$
 hence~$\uc^*$ is nondecreasing in~$\rc$.
 Recalling from~\eqref{eq:ODE-4} that
 $$ \phi (\rc) = \frac{1}{1 - \uc^*} - \b, $$
 we deduce that~$\phi$ also is nondecreasing.
 To prove that~$\phi (\rc) < \rc$, we first observe that, using basic algebra (expanding and reducing), we get
 $$ ((\rc + \b)^2 - 2 \rc)^2 > (\rc + \b)^2 \,\Delta \quad \hbox{for all} \quad \rc > 0, $$
 from which we easily deduce that
 $$ (\rc + \b)(\rc + \b - \sqrt{\Delta}) > 2 \rc \quad \hbox{and} \quad \phi (\rc) = \frac{2 \rc}{\rc + \b - \sqrt{\Delta}} - \b < \rc. $$
 Finally, note that, when~$\rc$ is small,
 $$ \begin{array}{rcl}
    \sqrt{\Delta} & \n = \n & \sqrt{(\rc + \b)^2 - 4 \rc} \\
                  & \n = \n & \displaystyle \sqrt{\b^2 + 2 (\b - 2) \,\rc + o (\rc)} = \b \,\bigg(1 + \frac{\b - 2}{\b^2} \ \rc + o (\rc)\bigg) \end{array} $$
 from which it follows that
 $$ \begin{array}{rcl}
    \lim_{\rc \to 0} \phi (\rc) & \n = \n & \lim_{\rc \to 0} \displaystyle \frac{2 \rc}{\displaystyle \rc + \b - \b \,\bigg(1 + \frac{\b - 2}{\b^2} \ \rc \bigg)} - \b \vspace*{4pt} \\
                                & \n = \n & \lim_{\rc \to 0} \displaystyle \frac{2 \b \rc}{\b \rc - (\b - 2) \,\rc} - \b = 0. \end{array} $$
 This completes the proof of Theorem~\ref{th:mean-field}.


\section{Graphical representation and monotonicity}
\label{sec:monotone}

\indent This section first shows how to construct the interacting particle system from collections of independent Poisson processes using an idea
 of Harris~\cite{harris_1972}.
 This results in a so-called graphical representation that we then use to couple cooperator-defector systems with different birth parameters and prove
 the monotonicity given in Theorem~\ref{th:monotonicity}.
 For all~$x, y, z \in \Z^d$, we let
\begin{equation}
\label{eq:harris}
    \begin{array}{rclcl}
         B (x, y) & \n = \n & \hbox{Poisson process with intensity~$\b / 2d$} & \hbox{when} & x \sim y \vspace*{4pt} \\
            U (x) & \n = \n & \hbox{Poisson process with intensity one} \vspace*{4pt} \\
      C (x, y, z) & \n = \n & \hbox{Poisson process with intensity~$\rc / 4d^2$} & \hbox{when} & x \sim y \ \hbox{and} \ y \sim z \vspace*{4pt} \\
         D (x, y) & \n = \n & \hbox{Poisson process with intensity~$\rd / 2d$} & \hbox{when} & x \sim y. \end{array}
\end{equation}
 The process can then be constructed starting from any initial configuration as follows.
\begin{itemize}
 \item At the times of the Poisson process~$B (x, y)$, we draw an arrow~$y \to x$ to indicate that if~$y$ is occupied by a cooperator,
       respectively a defector, and~$x$ is empty, then~$x$ becomes occupied by a cooperator, respectively a defector. \vspace*{2pt}
 \item At the times of the Poisson process~$U (x)$, we put a cross~$\times$ at site~$x$ to indicate that if~$x$ is occupied then it becomes empty. \vspace*{2pt}
 \item At the times of the Poisson process~$C (x, y, z)$, we put a dot~$\bullet$ at site~$z$ and draw a~$\c$-arrow~$y \to x$ to indicate that if both~$y$
       and~$z$ are occupied by cooperators and~$x$ is empty, then~$x$ becomes occupied by a cooperator. \vspace*{2pt}
 \item At the times of the Poisson process~$D (x, y)$, we draw a~$\d$-arrow~$y \to x$ to indicate that if~$y$ is occupied by a defector and~$x$ is empty,
       then~$x$ becomes occupied by a defector.
\end{itemize}
 Figure~\ref{fig:harris} gives an illustration of the graphical representation with time going up.
\begin{figure}[t]
\centering
\scalebox{0.40}{\input{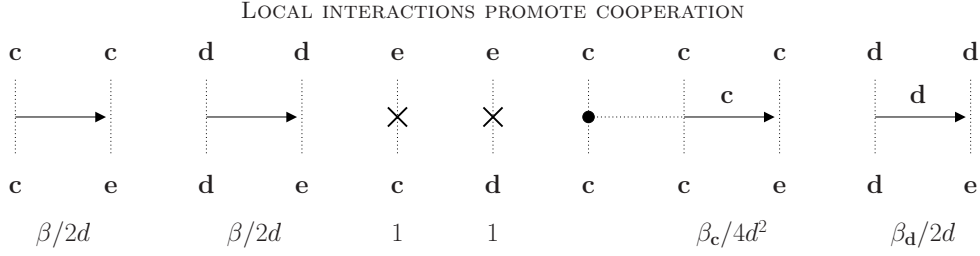}}
\caption{\upshape Graphical representation.}
\label{fig:harris}
\end{figure}
\begin{lemma} --
\label{lem:coupling}
 Assume~$\rcc \geq \rcd$ and~$\rdc \leq \rdd$, and for~$\is = \c, \d$ let
 $$ (\xi_t^{\is}) \ \hbox{be the process with parameters} \ \b, \rc = \rci \ \hbox{and} \ \rd = \rdi. $$
 Then, there is a coupling~$(\xi_t^{\c}, \xi_t^{\d})$ such that
 $$ \{x : \xi_t^{\c} (x) = \c \} \supset \{x : \xi_t^{\d} (x) = \c \} \quad \hbox{and} \quad
    \{x : \xi_t^{\c} (x) = \d \} \subset \{x : \xi_t^{\d} (x) = \d \} $$
 for all~$t > 0$ whenever the inclusions hold at time~$t = 0$.
\end{lemma}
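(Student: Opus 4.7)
The plan is to couple the two processes on an enriched graphical representation, with a single family of death marks $U(x)$ and basic-birth arrows $B(x,y)$ shared by both. Since $\rcc \geq \rcd$, I realize the cooperator-boosted births by letting $C^{\c}(x,y,z)$ be a Poisson process at rate $\rcc/4d^2$ driving $\xi_t^{\c}$, and independently thinning each of its points with probability $\rcd/\rcc$ to obtain a sub-family $C^{\d}(x,y,z) \subset C^{\c}(x,y,z)$ at rate $\rcd/4d^2$ driving $\xi_t^{\d}$. Symmetrically, since $\rdc \leq \rdd$, I draw the defector-boosted arrows $D^{\d}(x,y)$ for $\xi_t^{\d}$ at rate $\rdd/2d$ and thin them down to $D^{\c}(x,y) \subset D^{\d}(x,y)$ at rate $\rdc/2d$ for $\xi_t^{\c}$. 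By the thinning property for Poisson processes, each process then has the marginal law prescribed by the generator~\eqref{eq:ips}.

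On the state space $S$, I introduce the total order $\c \succ \e \succ \d$; enumerating pairs shows that the six pairs allowed by $\succeq$ are precisely those compatible with the two set inclusions in the conclusion, so the lemma is equivalent to the pointwise domination
\[
  \xi_t^{\c}(x) \succeq \xi_t^{\d}(x) \quad \hbox{for every} \quad x \in \Z^d \quad \hbox{and} \quad t > 0.
\]
Since only finitely many Poisson events affect any given bounded region in a bounded time interval, it suffices to verify, by induction on successive event times, that each kind of event in the coupled graphical representation preserves this pointwise relation.

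This reduces to a short case check driven by the asymmetric thinning. A death mark $U(x)$ resets the site to $\e$ in both processes, which is trivially compatible with the order since $\e$ is sandwiched between $\c$ and $\d$. For a shared $B$-arrow from $y$ to $x$, the hypothesis $\xi_t^{\c}(y) \succeq \xi_t^{\d}(y)$ rules out both troublesome configurations $\xi_t^{\c}(y) = \d$ with $\xi_t^{\d}(y) \neq \d$ and $\xi_t^{\d}(y) = \c$ with $\xi_t^{\c}(y) \neq \c$, so the updates at $x$ stay among the allowed pairs. Every $C$-event used by $\xi_t^{\d}$ is also used by $\xi_t^{\c}$, hence a new cooperator at $x$ in $\xi_t^{\d}$ is matched by a cooperator (or a pre-existing cooperator) at $x$ in $\xi_t^{\c}$; the extra events in $C^{\c} \setminus C^{\d}$ only promote sites to $\c$ in $\xi_t^{\c}$. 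Dually, every $D$-event used by $\xi_t^{\c}$ is also used by $\xi_t^{\d}$, and the extra events in $D^{\d} \setminus D^{\c}$ only create defectors in $\xi_t^{\d}$.

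The only real subtlety is the choice of the order on $S$: putting $\e$ strictly between $\c$ and $\d$ is forced by the shape of the inclusions in the statement and is exactly what makes the shared $B$-arrow step go through. Once this is fixed, the case analysis is mechanical, and the asymmetric thinning of the $C$- and $D$-processes mirrors the asymmetric assumptions $\rcc \geq \rcd$ and $\rdc \leq \rdd$ exactly as needed to carry the induction through.
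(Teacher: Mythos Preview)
Your argument is correct and is essentially the paper's own: both build the coupling on a shared graphical representation with common $B$-arrows and death marks, and realize the asymmetry in the $\rc$- and $\rd$-rates by nesting the corresponding Poisson families (your thinning $C^{\d}\subset C^{\c}$, $D^{\c}\subset D^{\d}$ is the same construction as the paper's superposition into shared $\c$-/$\d$-arrows plus extra $\c_+$-/$\d_+$-arrows). Your use of the total order $\c\succ\e\succ\d$ is a clean way to package the case check that the paper leaves as ``straightforward,'' and the resulting allowed pairs $(\c,\c),(\d,\d),(\e,\e),(\c,\d),(\c,\e),(\e,\d)$ match exactly.
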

\begin{proof}
 The basic idea is to construct the two processes from the common graphical representation depicted in Figure~\ref{fig:coupling-1} using six
 collections of independent Poisson processes.
\begin{itemize}
 \item The unlabeled arrows~$\to$, crosses~$\times$, $\c$-arrows and~$\d$-arrows have the same effect as before, and are used to construct both processes. \vspace*{2pt}
 \item The~$\c_+$-arrows have the same effect as the~$\c$-arrows on the first process but are not used in the construction of the second process. \vspace*{2pt}
 \item The~$\d_+$-arrows have the same effect as the~$\d$-arrows on the second process but are not used in the construction of the first process.
\end{itemize}
 The superposition property of Poisson processes, and the fact that
 $$ \rcd + (\rcc - \rcd) = \rcc \quad \hbox{and} \quad \rdc + (\rdd - \rdc) = \rdd $$
 imply that the two processes have the appropriate transition rates.
 In addition, it is straightforward to check from our construction that the coupling satisfies
 $$ (\xi_t^{\c} (x), \xi_t^{\d} (x)) \in \{(\c, \c), (\d, \d), (\e, \e), (\c, \d), (\c, \e), (\e, \d) \} \quad \hbox{for all} \quad x \in \Z^d $$
 whenever this is true at time~$t = 0$.
 This completes the proof.
\end{proof} \\ \\
 Using the coupling from the previous lemma, we are now ready to prove Theorem~\ref{th:monotonicity}. \\ \\
\begin{figure}[t]
\centering
\scalebox{0.40}{\input{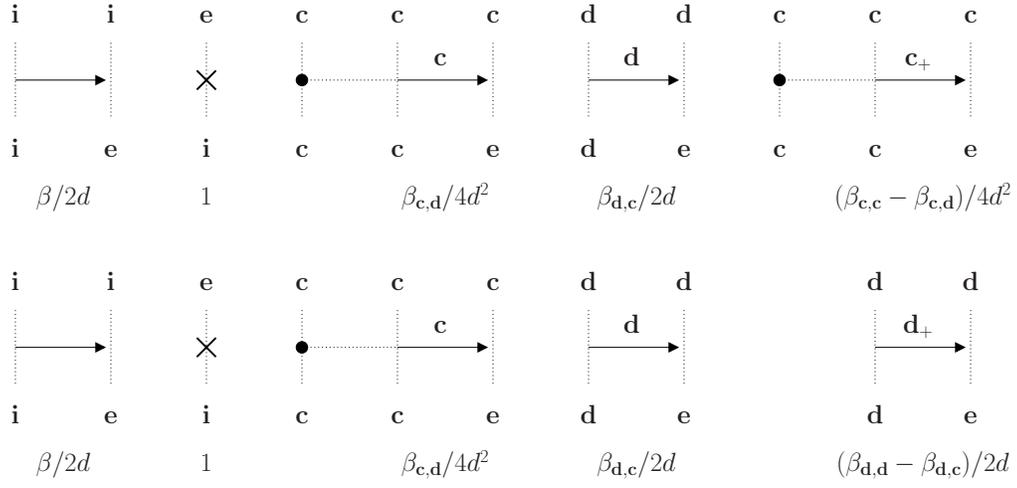}}
\caption{\upshape Graphical representation used in Lemma~\ref{lem:coupling}.}
\label{fig:coupling-1}
\end{figure}
\begin{proofof}{Theorem~\ref{th:monotonicity}}
 Let~$\rcc \geq \rcd$ and~$\rdc \leq \rdd$ and for~$\is \in \{\c, \d \}$ let
 $$ \begin{array}{rcl}
    \theta_{\c} (\rci, \rdi) & \n = \n & \hbox{survival probability of the cooperators for the} \vspace*{2pt} \\ &&
                                         \hbox{process with parameters} \ \b, \rc = \rci \ \hbox{and} \ \rd = \rdi \vspace*{4pt} \\
    \theta_{\d} (\rci, \rdi) & \n = \n & \hbox{survival probability of the defectors for the} \vspace*{2pt} \\ &&
                                         \hbox{process with parameters} \ \b, \rc = \rci \ \hbox{and} \ \rd = \rdi. \end{array} $$
 To prove the theorem, it suffices to prove that
 $$ \theta_{\c} (\rcc, \rdc) \geq \theta_{\c} (\rcd, \rdd) \quad \hbox{and} \quad \theta_{\d} (\rcc, \rdc) \leq \theta_{\c} (\rcd, \rdd). $$
 For each realization~$\xi_{\cdot} \in \{\c, \d, \e \}^{\mathbb R_+ \times \Z^d}$ of the process, let
 $$ h_{\c} (\xi_{\cdot}) = \ind \{\xi_t (x) = \c \ \hbox{i.o.} \} \quad \hbox{and} \quad h_{\d} (\xi_{\cdot}) = \ind \{\xi_t (x) = \d \ \hbox{i.o.} \}. $$
 Also, let~$(\xi_t^{\c}, \xi_t^{\d})$ be the coupling defined in Lemma~\ref{lem:coupling}.
 Then, for all~$t > 0$,
 $$ \{x : \xi_t^{\c} (x) = \c \} \supset \{x : \xi_t^{\d} (x) = \c \} \quad \hbox{and} \quad
    \{x : \xi_t^{\c} (x) = \d \} \subset \{x : \xi_t^{\d} (x) = \d \} $$
 therefore~$h_{\c} (\xi_{\cdot}^{\c}) \geq h_{\c} (\xi_{\cdot}^{\d})$ and~$h_{\d} (\xi_{\cdot}^{\c}) \leq h_{\d} (\xi_{\cdot}^{\d})$.
 Taking the expected value,
 $$ \begin{array}{rcl}
    \theta_{\c} (\rcc, \rdc) = P (\xi_t^{\c} (x) = \c \ \hbox{i.o.}) = E (h_{\c} (\xi_{\cdot}^{\c})) & \n \geq \n & E (h_{\c} (\xi_{\cdot}^{\d})) = \theta_{\c} (\rcd, \rdd) \vspace*{4pt} \\
    \theta_{\d} (\rcc, \rdc) = P (\xi_t^{\c} (x) = \d \ \hbox{i.o.}) = E (h_{\d} (\xi_{\cdot}^{\c})) & \n \leq \n & E (h_{\d} (\xi_{\cdot}^{\d})) = \theta_{\d} (\rcd, \rdd). \end{array} $$
 This completes the proof.
\end{proofof}


\section{The cooperators win (upper bound for~$\rc^+$)}
\label{sec:upper}

\indent The objective of this section is to prove that the cooperators win when~$\rc$ is sufficiently large but finite, which shows that the critical
 value~$\rc^+$ is finite.
 Our proof is based on a block construction, an approach/technique introduced in~\cite{bramson_durrett_1988} and reviewed in~\cite{durrett_1995}
 that consists in coupling the interacting particle system properly rescaled in space and time with oriented site percolation. \\


\noindent {\bf Block construction}.
 In the context of our cooperator-defector system, we will prove that whenever a space-time block is ``almost'' completely occupied by
 cooperators, a property referred to as~$\c$-occupied below, the~$2d$ adjacent blocks later in time satisfy the same property with a probability close to one.
 To define our construction rigorously, we first consider the lattice
 $$ \Lat = \{(z, n) \in \Z^d \times \N : z_1 + \cdots + z_d + n \ \hbox{is even} \}, $$
 which we turn into a directed graph~$\G$ by adding arrows
 $$ (z, n) \to (z', n') \quad \hbox{if and only if} \quad z' = z \pm e_j \ \hbox{for some} \ j = 1, 2, \ldots, d \ \ \hbox{and} \ \ n' = n + 1 $$
 where $e_j$ is the $j$th unit vector. Define
 $$ \B_z = z + \{-1, 0, 1 \}^d \quad \hbox{for all} \quad z \in \Z^d. $$
 Let~$T > 0$ to be fixed later, and consider the events
 $$ \begin{array}{rcl}
    \C (z, n) & \n : \n & \card \,\{x \in \B_z : \xi_t (x) = \c \} \geq 3^d - 1 \vspace*{4pt} \\  & \n \n & \hbox{and} \
                          \card \,\{x \in \B_z : \xi_t (x) = \d \} = 0 \quad \hbox{for all} \quad nT \leq t \leq (n + 1) \,T \end{array} $$
 for all~$(z, n) \in \Lat$.
 When this last event occurs, we declare site~$(z, n)$ to be~$\c$-occupied.
 To define oriented site percolation on the directed graph~$\G$, we let~$\ep > 0$ to be fixed later, and assume that each site of the lattice~$\Lat$
 is open with probability~$1 - \ep$ and closed with probability~$\ep$.
 We further assume that the process is seven-dependent, meaning that
 $$ P ((z_i, n_i) \ \hbox{is closed for} \ i = 1, 2, \ldots, m) = \ep^m $$
 whenever~$|z_i - z_j| \vee |n_i - n_j| > 7$ for~$i \neq j$.
 Recalling that a site is wet if and only if it can be reached from a directed path of open sites starting at level zero, i.e., starting at some
 open site~$(z, 0) \in \Lat$, the goal is to use a coupling argument to prove that, at each level~$n$, the set~$X_n$ dominates stochastically the
 set~$W_n^{\ep}$ where
 $$ X_n = \{z : (z, n) \ \hbox{is~$\c$-occupied} \} \quad \hbox{and} \quad W_n^{\ep} = \{z : (z, n) \ \hbox{is wet} \}. $$
 This can be done by thinking of the process as being generated from the graphical representation above, and constructing
 a collection of good events~$\A (z, n)$ that are measurable with respect to the graphical representation restricted to bounded space-time blocks,
 occur with probability at least~$1 - \ep$, and insure that the property of being~$\c$-occupied spreads in space and time. \\


\noindent {\bf The good events~$\A (z, n)$}.
 We now define the good events mentioned above and estimate their probability.
 These events can be conveniently written as the intersection of three events defined from the graphical representation introduced in the previous section. Let
 $$ \B_+ = (\B_{e_1} \cup \B_{- e_1}) \cup (\B_{e_2} \cup \B_{- e_2}) \cup \cdots \cup (\B_{e_d} \cup \B_{- e_d}) \quad \hbox{and} \quad \B_- = \B_+ \setminus \B_0, $$
 and consider the first event
 $$ \begin{array}{rcl}
      A_1 & \n : \n & \hbox{there is at least one death mark~$\times$ at each of} \vspace*{2pt} \\ & \n \n &
                      \hbox{the~$2d \times 3^{d - 1}$ sites in~$\B_-$ between time~$T$ and time~$2T$.} \end{array} $$
 The probability of this event is computed explicitly in the following lemma.
\begin{lemma} --
\label{lem:A1}
 We have~$P (A_1) = (1 - e^{-T})^{2d \times 3^{d - 1}}$.
\end{lemma}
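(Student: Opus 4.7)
The plan is to read $A_1$ directly off the graphical representation. The death marks $\times$ at site $x$ are precisely the arrival times of the Poisson process $U(x)$ from~\eqref{eq:harris}, which has intensity one, and the family $\{U(x)\}_{x \in \Z^d}$ is independent by construction. So $A_1$ decomposes as the intersection, over the sites $x \in \B_-$, of the individual events ``$U(x)$ has at least one arrival in $[T, 2T]$.''

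For a fixed $x$, the event that $U(x)$ has no arrival in an interval of length $T$ has probability $e^{-T}$, so the complement has probability $1 - e^{-T}$. Independence of the $U(x)$'s then yields
\[
 P(A_1) = (1 - e^{-T})^{|\B_-|},
\]
and the lemma reduces to checking that $|\B_-| = 2d \cdot 3^{d-1}$. For this combinatorial step, I would observe that $\B_{\pm e_j} \setminus \B_0$ consists precisely of the sites whose $j$-th coordinate equals $\pm 2$ and whose remaining coordinates lie in $\{-1,0,1\}$, giving $3^{d-1}$ sites. These $2d$ subsets are pairwise disjoint (two such sites from different indices would have to have coordinate $\pm 2$ in two positions, contradicting the constraint on the other coordinates), so $\B_- = \B_+ \setminus \B_0$ is their disjoint union and has cardinality $2d \cdot 3^{d-1}$.

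There is no real obstacle here: the whole content is the independence of the rate-one death clocks together with the elementary counting of $\B_-$. The lemma is essentially a bookkeeping step that will feed into the block construction, and its role is just to quantify how likely it is to clear out a spatial shell in a fixed time window.
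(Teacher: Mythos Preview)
Your proof is correct and essentially identical to the paper's: both use the independence of the rate-one death processes $U(x)$ to factor $P(A_1)$ as $(1-e^{-T})^{|\B_-|}$ and then invoke $|\B_-| = 2d \cdot 3^{d-1}$. The only difference is that you spell out the combinatorial count of $\B_-$ explicitly, whereas the paper simply asserts $\card \B_- = 2d \times 3^{d-1}$.
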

\begin{proof}
 Let~$\tau_x$ be the first time after~$T$ there is a death mark at~$x$.
 Because~$\tau_x - T$ is exponentially distributed with parameter one, and Poisson processes at different sites are independent,
 $$ \begin{array}{rcl}
      P (A_1) & \n = \n & P (\tau_x < 2T \ \hbox{for all} \ x \in \B_-) \vspace*{4pt} \\
              & \n = \n & (1 - P (\tau_0 - T > T))^{\card \B_-} = (1 - e^{-T})^{2d \times 3^{d - 1}} \end{array} $$
 which completes the proof.
\end{proof} \\ \\
 To define the second event, we let~$T_0 = 0$ and, for every~$i \in \N^*$,
 $$ \begin{array}{rcl}
      T_i & \n = \n & \hbox{the~$i$th time there is either a death mark~$\times$ at a site in~$\B_+$} \vspace*{2pt} \\
          &         & \hbox{or a~$\d$-arrow pointing at a site in~$\B_+$}. \end{array} $$
 Letting~$K = \max \,\{i : T_i < 2T \}$, the second event is defined as
 $$ \begin{array}{rcl}
      A_2 & \n : \n & T_{i + 1} - T_i > 2 \delta \quad \hbox{for all} \quad i = 0, 1, \ldots, K - 1, \end{array} $$
 where~$\delta > 0$ will be fixed later.
\begin{lemma} --
\label{lem:A2}
 Let~$r = 3^{d - 1} (2d + 3)(\b + \rd + 1)$.
 There exists~$a > 0$ such that
 $$ P (A_2) \geq 1 - \exp (- aT) - 4rT (1 - e^{- 2 \delta r}) \quad \hbox{for all~$T$ large}. $$
\end{lemma}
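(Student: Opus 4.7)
The plan is to read $(T_i)_{i \geq 1}$ as the arrival times of a Poisson process built from the graphical representation~\eqref{eq:harris} and then separately bound (i) the total number $K = N_{2T}$ of arrivals before time $2T$ and (ii) the probability that two consecutive arrivals are within $2 \delta$. These two estimates will produce the two error terms in the statement.

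For step (i), the relevant arrivals are a superposition of independent Poisson processes: the unit-rate death clocks at each of the $|\B_+| = 3^{d-1}(2d + 3)$ sites of $\B_+$, together with the $\b$-arrow and $\d$-arrow processes pointing into $\B_+$ (both of which can deposit a defector at an empty target site). Each site of $\B_+$ is targeted by these arrows at total rate $\b + \rd$, so the combined intensity is exactly $|\B_+| (\b + \rd + 1) = r$. Hence $N_{2T}$ is Poisson with mean $2rT$, and a standard Chernoff tail estimate for the Poisson distribution yields some $a > 0$ with
$$ P (N_{2T} > 4rT) \leq e^{-aT} \quad \hbox{for all $T$ large}. $$

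For step (ii), the strong Markov property of Poisson processes implies that, conditional on $T_i < 2T$, the increment $T_{i+1} - T_i$ is exponential with parameter $r$, so
$$ P (T_{i+1} - T_i \leq 2 \delta \mid T_i < 2T) \leq 1 - e^{-2 \delta r}. $$
On the event $\{N_{2T} \leq 4rT\}$, the index $i$ in the definition of $A_2$ ranges over at most $\lceil 4rT \rceil$ values, and a union bound yields $P (A_2^c, \,N_{2T} \leq 4rT) \leq 4rT \,(1 - e^{-2 \delta r})$. Adding the two contributions proves the lemma. There is no genuine obstacle: once the combinatorial count $|\B_+| = 3^{d-1}(2d+3)$ and the total Poisson intensity have been identified and matched with $r$, the argument reduces to routine Poisson tail inequalities and the strong Markov property.
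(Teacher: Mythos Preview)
Your proof is correct and follows essentially the same route as the paper: identify the superposition of death marks and defector-birth arrows on~$\B_+$ as a Poisson process of rate~$r$, use a Chernoff/large-deviation bound to get $P(K>4rT)\le e^{-aT}$, and then apply a union bound over the at most~$4rT$ exponential$(r)$ gaps. Your decomposition $P(A_2^c,\,K\le 4rT)+P(K>4rT)$ is, if anything, slightly cleaner than the paper's conditional formulation, but the argument is the same.
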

\begin{proof}
 To begin with, we observe that
\begin{itemize}
 \item death marks appear independently at each vertex at rate one, \vspace*{2pt}
 \item $\d$-arrows pointing at a given vertex appear independently at rate~$\b + \rd$ and \vspace*{2pt}
 \item there are~$3^{d - 1} (2d + 3)$ vertices in~$\B_+$.
\end{itemize}
 Therefore the number of death marks and~$\d$-arrows by time~$2T$ is
 $$ K = \max \,\{i : T_i < 2T \} = \poisson (2rT) \quad \hbox{where} \quad r = 3^{d - 1} (2d + 3)(\b + \rd + 1). $$
 In particular, large deviations estimates for the Poisson distribution give the existence of a positive constant~$a > 0$ that only depends on~$r$ such that
 $$ P (K > 4rT) \leq \exp (- aT) \quad \hbox{for all~$T$ large}. $$
 Using also that the random variables~$T_{i + 1} - T_i$ are independent and exponentially distributed with parameter~$r$, we deduce that
 $$ \begin{array}{rcl}
      P (A_2) & \n \geq \n & 1 - P (K > 4rT) - P (A_2^c \,| \,K \leq 4rT) \vspace*{4pt} \\
              & \n \geq \n & 1 - \exp (- aT) - P (T_{i + 1} - T_i \leq 2 \delta \ \hbox{for some} \ i < 4rT) \vspace*{4pt} \\
              & \n \geq \n & 1 - \exp (- aT) - 4rT \,P (T_1 \leq 2 \delta) = 1 - \exp (- aT) - 4rT \left(1 - e^{- 2 \delta r} \right). \end{array} $$
 This completes the proof.
\end{proof} \\ \\
 Finally, consider the event
 $$ \begin{array}{rcl}
      A_3 & \n : \n & \hbox{for all~$n \leq 2T / \delta$ and all~$x \in \B_+$, there is at least} \vspace*{2pt} \\ & \n \n &
                      \hbox{ one~$\c$-arrow from a site in~$\B_0$ pointing at~$x$ beween times~$n \delta$ and~$(n + 1) \,\delta$}. \end{array} $$
\begin{lemma} --
\label{lem:A3}
 Let~$R = (\b + \rc / 2d) / 2d$. Then,
 $$ P (A_3) \geq (1 - e^{- \delta R})^{2 \times 3^{d - 1} (2d + 3) T / \delta}. $$
\end{lemma}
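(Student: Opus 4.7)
The plan is to bound $P(A_3)$ from below by writing it as the intersection of independent sub-events indexed by pairs $(n, x)$ with $n \leq 2T/\delta$ and $x \in \B_+$, each of which occurs with probability at least $1 - e^{-\delta R}$; multiplying then produces the stated bound.

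First, for each $x \in \B_+$ I will fix a specific neighbor $y_x \in \B_0$ with $y_x \sim x$; such a choice exists because every point of $\B_+ = \bigcup_{j,\pm} \B_{\pm e_j}$ lies at lattice distance at most one from $\B_0$. Since $y_x \in \B_0$ has at least one neighbor inside $\B_0$, I will also fix a site $z_x \in \B_0$ with $z_x \sim y_x$. These are deterministic choices, made once at the start, depending only on the geometry.

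Next, for each pair $(n, x)$ I consider the sub-event that at least one firing of $B(x, y_x)$ or $C(x, y_x, z_x)$ occurs in the interval $[n \delta, (n+1) \delta]$. By the definition of the graphical representation, any such firing produces an arrow (either a regular arrow or a $\c$-arrow with dot at $z_x$) pointing at $x$ from the site $y_x \in \B_0$, which is the witness required for the $(n, x)$-slice of $A_3$. Since $B(x, y_x)$ and $C(x, y_x, z_x)$ are independent Poisson processes of rates $\b/2d$ and $\rc/4d^2$ respectively, their superposition has rate $R = \b/2d + \rc/4d^2 = (\b + \rc/2d)/2d$, so this sub-event has probability exactly $1 - e^{-\delta R}$.

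Finally, I will check mutual independence of the sub-events across $(n, x)$: for a fixed $x$ the sub-events for different $n$ use disjoint time intervals of the same Poisson processes, while for different $x$ the processes $B(x, \cdot)$ and $C(x, \cdot, \cdot)$ are indexed by the target site and are therefore completely disjoint. The intersection thus has probability at least $(1 - e^{-\delta R})^{|\B_+| \cdot 2T/\delta}$, and substituting $|\B_+| = 3^{d-1}(2d+3)$ (as computed in the proof of Lemma~\ref{lem:A2}) produces the claimed bound. No serious obstacle is anticipated; the main content is the geometric existence of the auxiliary sites $y_x, z_x$ and the standard independence structure of the Harris graphical Poisson processes.
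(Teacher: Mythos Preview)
Your proof is correct and follows essentially the same approach as the paper's: both decompose $A_3$ into independent sub-events over pairs $(n,x)$, bound each by $1-e^{-\delta R}$ via the exponential waiting time, and multiply. You are simply more explicit, fixing a concrete pair $(y_x,z_x)$ so that the single superposed process $B(x,y_x)+C(x,y_x,z_x)$ has rate exactly $R$, whereas the paper defines $\sigma_x$ and asserts its rate is ``at least $R$'' without naming the witnessing processes; your care in noting that the processes are indexed by the target site $x$ (hence independent across $x$) and restricted to disjoint time windows (hence independent across $n$) is exactly the content behind the paper's appeals to independence and the memoryless property. One incidental remark: your requirement $z_x\in\B_0$ is not needed for the lemma as stated (the event $A_3$ constrains only the source $y$ of the arrow, not the dot $z$), though it is harmless here and becomes relevant in the later application in Lemma~\ref{lem:perco}.
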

\begin{proof}
 For each site~$x \in \B_+$, let~$\sigma_x$ be the first time there is a~$\c$-arrow from a site in~$\B_0$ pointing at site~$x$.
 Because~$\sigma_x$ is exponentially distributed with parameter at least~$R$, Poisson processes at different sites are independent, and there
 are~$3^{d - 1} (2d + 3)$ vertices in~$\B_+$,
 $$ P (\sigma_x < \delta \ \hbox{for all} \ x \in \B_+) = \prod_{x \in \B_+} (1 - P (\sigma_x \geq \delta)) \geq
      (1 - e^{- \delta R})^{3^{d - 1} (2d + 3)}. $$
 By the memoryless property of the exponential distribution,
 $$ P (A_3) \geq \left((1 - e^{- \delta R})^{3^{d - 1} (2d + 3)} \right)^{2T / \delta}
            \geq (1 - e^{- \delta R})^{2 \times 3^{d - 1} (2d + 3) T / \delta} $$
 and the proof is complete.
\end{proof} \\ \\
 To define the collection of good events, we set~$\A (0, 0) = A_1 \cap A_2 \cap A_3$ and define~$\A (z, n)$ similarly through a
 translation of vector~$(z, nT)$.
\begin{lemma} --
\label{lem:A123}
 For all~$\ep, \b, \rd > 0$, there exist~$T, \delta > 0$ such that
 $$ P (\A (z, n)) = P (A_1 \cap A_2 \cap A_3) \geq 1 - \ep \quad \hbox{for all~$(z, n) \in \Lat$ and all~$\rc$ large}. $$
\end{lemma}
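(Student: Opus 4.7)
The plan is to apply a union bound to the three lemmas above and choose $T$, $\delta$, and then~$\rc$ in that order so each of $P(A_i^c)$ is at most~$\ep/3$. Translation invariance of the graphical representation~\eqref{eq:harris} immediately gives that $P(\A(z,n))$ does not depend on $(z,n) \in \Lat$, so it suffices to work with $(z,n) = (0,0)$.

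First I would choose $T$ large. Since $(1 - e^{-T})^{2d \times 3^{d-1}} \to 1$ as $T \to \infty$ by Lemma~\ref{lem:A1}, there exists $T_0 = T_0(\ep, d)$ such that $P(A_1^c) \leq \ep/3$ for all $T \geq T_0$. I would also require $T$ large enough that the asymptotic estimate in Lemma~\ref{lem:A2} is valid and, using that the constant $a = a(r) > 0$ is already fixed (because $r = 3^{d-1}(2d+3)(\b + \rd + 1)$ depends only on $\b, \rd, d$), that $\exp(-aT) \leq \ep/6$. Fix such a $T$.

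Next, with $T$ and $r$ fixed, the remaining term in the bound of Lemma~\ref{lem:A2} is $4rT(1 - e^{-2 \delta r})$, which tends to zero as $\delta \to 0^+$. Hence I can choose $\delta = \delta(\ep, T, \b, \rd, d) > 0$ small enough that $4rT(1 - e^{-2\delta r}) \leq \ep/6$, giving $P(A_2^c) \leq \ep/3$. Once $T$ and $\delta$ are fixed, the exponent $K = 2 \times 3^{d-1}(2d+3) T/\delta$ in Lemma~\ref{lem:A3} is a fixed finite constant. Finally, $R = (\b + \rc/2d)/2d \to \infty$ as $\rc \to \infty$, so $(1 - e^{-\delta R})^K \to 1$ and there exists $\rc^{\star} = \rc^{\star}(\ep, T, \delta, \b, d)$ such that $P(A_3^c) \leq \ep/3$ whenever $\rc \geq \rc^{\star}$. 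A union bound then gives
$$P(\A(0,0)) \geq 1 - P(A_1^c) - P(A_2^c) - P(A_3^c) \geq 1 - \ep$$
for all $\rc \geq \rc^{\star}$, which is the desired conclusion.

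There is no real obstacle here beyond bookkeeping: the argument works precisely because each event $A_i$ is controlled by a different limit ($T \to \infty$ for $A_1$, $\delta \to 0$ for $A_2$ given $T$, and $\rc \to \infty$ for $A_3$ given $T$ and $\delta$), so the parameters can be tuned sequentially without interfering with one another. The only subtlety to keep in mind is the order of the quantifiers, since $\delta$ must be allowed to depend on $T$, and the threshold on $\rc$ must depend on both $T$ and $\delta$; this is consistent with the statement, which only requires existence of $T$ and $\delta$ followed by ``all $\rc$ large.''
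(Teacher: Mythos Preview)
Your proposal is correct and follows essentially the same approach as the paper: translation invariance reduces to $(z,n)=(0,0)$, then $T$, $\delta$, and a threshold on $\rc$ are chosen sequentially (in that order) so that each of $P(A_1^c)$, $P(A_2^c)$, $P(A_3^c)$ is at most $\ep/3$, and a union bound finishes. The paper even uses the identical split $\ep/3$, $\ep/6 + \ep/6$, $\ep/3$ across the three events.
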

\begin{proof}
 To begin with, we observe that, because (the distribution of) the graphical representation is translation invariant in space and time
 $$ P (\A (z, n)) = P (\A (0, 0)) = P (A_1 \cap A_2 \cap A_3) \quad \hbox{for all~$(z, n) \in \Lat$}, $$
 which proves the equality in the statement of the lemma.
 To prove the inequality, we first fix the scale parameter~$T < \infty$ large such that
\begin{equation}
\label{eq:A123-2}
  (1 - e^{-T})^{2d \times 3^{d - 1}} \geq 1 - \ep / 3 \qquad \hbox{and} \qquad \exp (- aT) \leq \ep / 6
\end{equation}
 where~$a > 0$ is the constant in the statement of Lemma~\ref{lem:A2}.
 The scale parameter~$T < \infty$ being fixed, we can fix the parameter~$\delta > 0$ small such that
\begin{equation}
\label{eq:A123-3}
  4rT (1 - e^{- 2 \delta r}) \leq \ep / 6.
\end{equation}
 Finally, the scale parameters~$T < \infty$ and~$\delta > 0$ being fixed, because the rate~$R$ defined in Lemma~\ref{lem:A3} goes to infinity
 as~$\rc \to \infty$, for all rate~$\rc$ sufficiently large, we have
\begin{equation}
\label{eq:A123-4}
  (1 - e^{- \delta R})^{2 \times 3^{d - 1} (2d + 3) T / \delta} \geq 1 - \ep / 3.
\end{equation}
 Combining~\eqref{eq:A123-2}--\eqref{eq:A123-4} and Lemmas~\ref{lem:A1}--\ref{lem:A3} implies that
\begin{equation}
\label{eq:A123-5}
  \begin{array}{rcl}
    P (A_1 \cap A_2 \cap A_3) & \n \geq \n & 1 - (1 - P (A_1)) - (1 - P (A_2)) - (1 - P (A_3)) \vspace*{4pt} \\
                              & \n = \n & - 2 + P (A_1) + P (A_2) + P (A_3) \vspace*{4pt} \\
                              & \n \geq \n & - 2 + 3 \,(1 - \ep / 3) = 1 - \ep. \end{array}
\end{equation}
 This completes the proof.
\end{proof} \\


\noindent {\bf Coupling with oriented percolation}.
 With Lemma~\ref{lem:A123} in hands, we can now prove the existence of a coupling in which the set of~$\c$-occupied sites dominates stochastically
 the set of wet sites in the oriented site percolation model.
 The basic idea is to use the previous estimates to check that the assumptions of~\cite[Theorem~4.3]{durrett_1995} are satisfied, which is done
 in the next lemma.
\begin{lemma} --
\label{lem:perco}
 For all~$\ep, \b, \rd > 0$, there exist~$T, \delta > 0$ such that, for all~$\rc$ large,
 $$ P (z \in W_n^{\ep}) \leq P (z \in X_n) \ \hbox{for all} \ (z, n) \in \Lat \quad \hbox{whenever} \quad W_0^{\ep} \subset X_0. $$
\end{lemma}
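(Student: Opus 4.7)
The plan is to invoke the block-construction result Theorem~4.3 of~\cite{durrett_1995}, which, given a family of events on the space-time lattice $\Lat$ that are $M$-dependent for some finite $M$, have probability at least $1 - \ep$, and force the relevant property of the interacting particle system to propagate along the directed graph $\G$, produces the desired stochastic domination by an independent oriented percolation. Declare $(z, n) \in \Lat$ \emph{open} whenever $\A(z, n)$ holds. Choose $T$ and $\delta$ as in Lemma~\ref{lem:A123}, so that the open probability exceeds $1 - \ep$ once $\rc$ is large enough.

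The finite-dependence condition is immediate from the graphical construction: the event $\A(z, n) = A_1 \cap A_2 \cap A_3$ is measurable with respect to the Poisson processes in~\eqref{eq:harris} attached to vertices, edges, and triples that meet $z + \B_+$, and restricted to the time slab $[nT, (n+2)T]$. Since this space-time box has bounded spatial diameter and bounded temporal length, open events at points of $\Lat$ sufficiently far apart depend on disjoint portions of the graphical representation, which yields the required 7-dependence once the block size is matched with the spacing rule on $\Lat$.

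The heart of the matter is the \emph{propagation claim}: whenever $(z, n)$ is $\c$-occupied and $\A(z, n)$ occurs, each of the $2d$ children $(z \pm e_j, n+1)$ in $\G$ is also $\c$-occupied. Starting from $\B_z$ with at least $3^d - 1$ cooperators and no defectors at time $(n+1)T$, event $A_1$ kills every individual present in $z + \B_-$ by a death mark during $[(n+1)T, (n+2)T]$; event $A_2$ separates consecutive ``bad'' times (deaths in $z + \B_+$ and $\d$-arrows pointing into it) by gaps longer than $2\delta$; and event $A_3$ guarantees that within each such gap every site of $z + \B_+$ receives a $\c$-arrow from a source in $\B_z$, which still carries cooperators. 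An induction on the bad times $T_i$ then shows that $z + \B_+$ remains defector-free on $[(n+1)T, (n+2)T]$ and that at least $3^d - 1$ cooperators persist in each child block $\B_{z \pm e_j} \subset z + \B_+$ throughout that interval. The main obstacle is exactly this bookkeeping: one has to simultaneously control the defector count and the cooperator count in every child block across every bad time, using only the information supplied by $A_1$, $A_2$, and $A_3$. Once the propagation claim is established, the hypotheses of Theorem~4.3 of~\cite{durrett_1995} are all verified and the coupling $W_n^\ep \subset X_n$ (whenever $W_0^\ep \subset X_0$) follows.
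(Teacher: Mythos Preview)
Your proof follows the paper's approach exactly: verify the three hypotheses of \cite[Theorem~4.3]{durrett_1995}---the propagation $\A(z,n)\cap\{z\in X_n\}\subset\{z\pm e_j\in X_{n+1}\}$ via the roles of $A_1,A_2,A_3$ that you describe, the probability bound via Lemma~\ref{lem:A123}, and measurability with respect to the bounded space-time box $\R(z,n)$---and then invoke that theorem to obtain the coupling $W_n^\ep\subset X_n$. The only cosmetic difference is that you launch the induction on the bad times from the single snapshot at time $(n+1)T$, whereas the paper uses the full interval $[nT,(n+1)T]$ supplied by $\C(z,n)$ to feed the same induction over $[nT,(n+2)T]$; the underlying bookkeeping is identical.
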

\begin{proof}
 The first step is to show how the events~$\C (z, n)$ are related to the good events~$\A (z, n)$ introduced above.
 To begin with, observe that, on the event~$A_2 \cap A_3$,
\begin{itemize}
 \item For all~$i = 0, 1, \ldots, K - 1$ and all~$x \in \B_+$, there is at least one~$\c$-arrow from a site in~$\B_0$ pointing at site~$x$
       between time~$T_i$ and time~$T_{i + 1}$.
\end{itemize}
 Assuming in addition that~$\C (0, 0)$ occurs gives the following:
\begin{itemize}
 \item Up to time~$2T$, each time a site in~$\B_+$ becomes empty, it becomes occupied by a cooperator before any other site in~$\B_+$
       becomes empty.
\end{itemize}
 Assuming also that~$A_1$ occurs, which guarantees that all sites in~$\B_-$ become empty at least once between time~$T$ and time~$2T$, we get
 $$ \begin{array}{rcl}
    \A (0, 0) \cap \{0 \in X_0 \} & \n = \n & (A_1 \cap A_2 \cap A_3) \cap \C (0, 0) \vspace*{4pt} \\
                                  & \n \subset \n & \C (e_1, 1) \cap \C (- e_1, 1) \cap \cdots \cap \C (e_d, 1) \cap \C (- e_d, 1) \vspace*{4pt} \\
                                  & \n = \n & \{\pm e_j \in X_1 \ \hbox{for} \ j = 1, 2, \ldots, d \}. \end{array} $$
 Using also translation invariance, we have more generally
\begin{equation}
\label{eq:perco-1}
   \A (z, n) \cap \{z \in X_n \} \subset \{z \pm e_j \in X_{n + 1} \ \hbox{for} \ j = 1, 2, \ldots, d \} \quad \hbox{for all} \quad (z, n) \in \Lat.
\end{equation}
 From Lemma~\ref{lem:A123}, we also have that~$T$ and~$\delta$ can be chosen such that
\begin{equation}
\label{eq:perco-2}
  P (\A (z, n)) \geq 1 - \ep \quad \hbox{for all~$(z, n) \in \Lat$ and all~$\rc$ large}.
\end{equation}
 Observe also from the definition that
 $$ \begin{array}{rl}
      A_1 \cap A_2 \cap A_3 & \hbox{is measurable with respect to the graphical representation} \vspace*{2pt} \\
                            & \hbox{restricted to the space-time region~$\R (0, 0) = \{-3, \ldots, 3 \}^d \times [0, 2T]$}. \end{array} $$
 Because~$\A (z, n)$ is defined from a translation of~$\A (0, 0)$,
\begin{equation}
\label{eq:perco-3}
  \begin{array}{rl}
    \A (z, n) & \hbox{is measurable with respect to the graphical representation restricted} \vspace*{2pt} \\
              & \hbox{to the space-time region~$\R (z, n) = (z + \{- 3, \ldots, 3 \}^d) \times [nT, (n + 2) T]$}. \end{array}
\end{equation}
 The three properties~\eqref{eq:perco-1}--\eqref{eq:perco-3} are the assumptions of~\cite[Theorem~4.3]{durrett_1995}, which implies that there is a coupling of
 the interacting particle system and seven-dependent oriented site percolation process in which sites are open with probability~$1 - \ep$ such that
 $$ P (W_n^{\ep} \subset X_n) = 1 \quad \hbox{whenever} \quad W_0^{\ep} \subset X_0, $$
 where the seven-dependency follows from the fact that
 $$ \R (z, n) \cap \R (z', n') = \varnothing \quad \hbox{whenever} \quad |z - z'| \vee |n - n'| > 7. $$
 The lemma directly follows from the existence of this coupling.
\end{proof} \\


\noindent {\bf Proof of the upper bound}.
 We are now ready to prove that~$\rc^+ (\b, \rd) < \infty$, i.e., starting from a translation invariant product measure, the cooperators win provided
 the benefit from cooperation~$\rc$ is sufficiently large.
 We start by proving survival of the cooperators.
\begin{lemma} --
\label{lem:invade}
 For all~$\b > \b_*$ and~$\rd \geq 0$,
 $$ P (\xi_t (x) = \c \ \hbox{i.o.}) = 1 \quad \hbox{for all~$\rc$ large}. $$
\end{lemma}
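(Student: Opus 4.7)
Plan of proof. The strategy is to deduce survival from the coupling with supercritical seven-dependent oriented site percolation provided by Lemma~\ref{lem:perco}, once the percolation has been seeded with a nontrivial initial wet set.

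First, I would fix $\ep > 0$ small enough that seven-dependent oriented site percolation on $\G$ with parameter $1 - \ep$ is supercritical, in the sense that the wet cluster from a single wet site at level~$0$ is infinite with positive probability. Such $\ep$ exists by the standard comparison of finite-range dependent percolation with independent Bernoulli percolation. Applying Lemma~\ref{lem:perco} with this choice of $\ep$ yields scales $T, \delta > 0$ such that, for all $\rc$ sufficiently large, the coupling gives $W_n^{\ep} \subset X_n$ whenever $W_0^{\ep} \subset X_0$.

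Next, I would show that $P(\C(0, 0)) > 0$, which provides the seed. The probability that every site of $\B_0$ is a cooperator at time~$0$ is positive because the initial measure is a product with positive density of cooperators and $\B_0$ is finite; on this event, the further event that during $[0, T]$ no death mark $\times$ falls on $\B_0$ and no arrow from outside enters $\B_0$ is a finite-volume, strictly positive probability event for the independent Poisson processes of the graphical representation, and on the intersection $\C(0, 0)$ occurs. Since the joint law of the initial configuration and the graphical representation is translation invariant and ergodic under spatial shifts, the set $X_0 = \{z : \C(z, 0)\}$ has positive density in the even sublattice almost surely, and is in particular infinite.

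Taking $W_0^{\ep} = X_0$ in the coupling, the supercritical percolation started from an infinite wet set survives almost surely, and by standard properties of supercritical oriented percolation (shape theorem, see~\cite{durrett_1995}), for every fixed $z \in \Lat$ one has $(z, n) \in W_n^{\ep}$ for infinitely many $n$ almost surely. For any fixed $x \in \Z^d$, choosing $z$ with $x \in \B_z$, the block $\B_z$ is $\c$-occupied during $[nT, (n + 1) T]$ for infinitely many $n$; during each such interval $\B_z$ contains no defectors and at most one empty site at any time, so a direct Poisson estimate using the basic birth rate $\b$ at $x$ from its cooperator neighbors shows that $\xi_t(x) = \c$ at some time in the interval with probability one. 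This gives $\xi_t(x) = \c$ infinitely often almost surely.

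The main obstacle, I expect, is the last step: passing from the block-level statement ``$\B_z$ is $\c$-occupied at infinitely many levels'' to the site-level statement $\xi_t(x) = \c$ infinitely often. This requires invoking the shape theorem for supercritical oriented percolation to guarantee that every $z$ is reached by the wet cluster infinitely often, and also ruling out the degenerate possibility that $x$ remains persistently the unique empty site within its block, which is handled by a Poisson estimate on the birth process at $x$ inside each interval $[nT, (n + 1) T]$.
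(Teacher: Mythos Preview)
Your approach matches the paper's: seed the coupling of Lemma~\ref{lem:perco} with $W_0^{\ep} = X_0$ (infinite a.s.\ by translation invariance and ergodicity, exactly as you argue), and use supercriticality of the percolation to obtain $z \in X_n$ infinitely often. The paper is terser, citing the contour argument of~\cite{durrett_1984} in place of the shape theorem and simply asserting both $\card(X_0) = \infty$ and the final block-to-site implication $\{z \in X_n \ \hbox{i.o.}\} \subset \{\xi_t(x) = \c \ \hbox{i.o.}\}$ without further comment.

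On that last step, which you rightly flag as the obstacle: your claim that $\xi_t(x) = \c$ at some time in a given $\c$-occupied interval ``with probability one'' is not literally correct, since $x$ can remain the unique empty site throughout $[nT,(n+1)T]$ with small but positive probability. The Poisson estimate you propose only yields a uniformly positive probability per interval; to conclude across infinitely many intervals you need a conditional Borel--Cantelli argument (for instance, the event that a basic birth arrow $B(x,y)$ with $y \in \B_z$ reaches $x$ before any death mark in $\B_z$ during $[nT, nT+1]$ depends only on the graphical representation after time $nT$, hence has fixed conditional probability given $\mathcal F_{nT}$, and on $\C(z,n)$ this event forces $\xi_t(x) = \c$ for some $t$). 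The paper, for what it is worth, glosses over this step entirely.
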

\begin{proof}
 A contour argument~\cite{durrett_1984} implies that there exists~$\ep > 0$ small such that
 $$ P (z \in W_n^{\ep} \ \hbox{i.o.}) = 1 \quad \hbox{whenever} \quad \card (W_0^{\ep}) = \infty. $$
 Then, using Lemma~\ref{lem:perco} and that~$\card (X_0) = \infty$ for any initial translation invariant product measure with a positive
 density of cooperators, we deduce that there is~$\rc < \infty$ large such that
 $$ P (\xi_t (x) = \c \ \hbox{i.o.}) \geq P (z \in X_n \ \hbox{i.o.}) \geq P (z \in W_n^{\ep} \ \hbox{i.o.}) = 1, $$
 where the first inequality follows from the definition of a~$\c$-occupied site.
 In particular, the cooperators survive for~$\rc$ large and the lemma follows.
\end{proof} \\ \\
 Because of the presence of closed sites in the percolation process that may result in the presence of space-time regions containing defectors
 even at equilibrium, the previous lemma does not exclude the possibility that cooperators and defectors coexist.
 To also prove extinction of the defectors, we rely on the lack of percolation of the dry sites when~$\ep > 0$ is small.
\begin{lemma} --
\label{lem:dry}
 For all~$\b > \b_*$ and~$\rd \geq 0$,
 $$ P (\xi_t (x) = \d \ \hbox{i.o.}) = 0 \quad \hbox{for all~$\rc$ large}. $$
\end{lemma}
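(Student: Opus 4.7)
My plan is to extend the block-construction coupling of Lemma~\ref{lem:perco} to show that for $\rc$ sufficiently large (hence $\ep$ sufficiently small by Lemma~\ref{lem:A123}), defectors are confined to finite dry space-time regions of the percolation and die out within each one. The first observation is that the $\c$-occupied condition forces any wet block $(z,n) \in W_n^\ep \subset X_n$ to be defector-free throughout $[nT,(n+1)T]$. Moreover, because neighboring blocks $\B_z$ and $\B_{z'}$ share sites whenever $z - z' \in \{-2,\ldots,2\}^d$, defectors in a dry block cannot occupy any site shared with a wet neighbor, since a defector there would contradict the $\c$-occupied status of the wet neighbor. Defectors therefore live only in the interior of the dry component of the percolation.

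\textbf{Finite dry clusters and extinction.} For $\ep$ small, a standard Peierls/dual-contour argument in the supercritical regime shows that dry sites do not percolate: the connected components of dry sites in $\Lat$ are almost surely finite. Within a finite dry cluster, defectors evolve as a finite-population system with deaths at rate one and births constrained to a finite interior, so all defectors in the cluster die out within a finite random time a.s. Crucially, because wet blocks act as absorbing barriers, defectors cannot propagate from one dry cluster to another, so each defector lineage starts from an initial defector and remains trapped inside a single finite dry cluster. For any fixed $x \in \Z^d$, a Borel--Cantelli argument applied to the level-zero ``source clusters'' that can contain ancestors of defectors ever present at $x$ then yields $P(\xi_t(x) = \d \ \hbox{i.o.}) = 0$.

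\textbf{Main obstacle.} The hardest step is the Peierls-type contour estimate for the $7$-dependent oriented percolation. A reblocking of the dependence structure reduces it to independent Bernoulli site percolation at the cost of replacing $\ep$ by a fixed positive power of itself, which remains small for $\rc$ large; the classical contour argument of \cite{durrett_1984} then gives a.s.\ finiteness of dry clusters. A secondary but important subtlety is verifying that defectors cannot propagate between distinct dry clusters; this rests on the choice of the event $A_3$, which guarantees that every empty site in $\B_+$ is refilled by a cooperator from $\B_0$ during each short time window granted by $A_2$, so no defector can cross a wet-block boundary.
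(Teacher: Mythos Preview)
Your approach shares the paper's core idea---defectors are confined to the dry (non-wet) region of the percolation, and that region does not percolate for small~$\ep$---but the paper's execution is more direct. Rather than invoking finite dry clusters, a finite-population extinction argument, and Borel--Cantelli, the paper simply observes that a defector in block~$(z,n)$ forces a dry directed path from level~$0$ to~$(z,n)$, and then cites known results (\cite{durrett_1992} and \cite[Lemma~9]{lanchier_2013}) giving $P(\text{such a path exists}) \to 0$ as $n\to\infty$, which immediately yields $P(\xi_t(x)=\d \ \hbox{i.o.})=0$. Your extinction step is in fact confused: a dry cluster in~$\Lat$ is a \emph{space-time} object, so its finiteness already means the defectors it carries are absent after the cluster's maximal time level; there is no separate finite-state Markov chain to run to extinction, nor a Borel--Cantelli sum to control.

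There is also a genuine gap in your containment argument. Two sites $(z,n)$ and $(z+2e_1,n)$ of~$\Lat$ at the same time level are not adjacent in the percolation graph~$\G$, yet a defector lineage can pass from~$\B_{z+2e_1}$ into~$\B_z$ within the window~$[nT,(n+1)T]$. Your block-overlap remark does not rule this out: by parity $(z+e_1,n)\notin\Lat$, and neither $(z+e_1,n-1)$ nor $(z+e_1,n+1)$ being wet says anything about~$\B_{z+e_1}$ during $[nT,(n+1)T]$, since $\C(z+e_1,n-1)$ and $\C(z+e_1,n+1)$ concern the intervals $[(n-1)T,nT]$ and $[(n+1)T,(n+2)T]$ respectively. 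The paper handles this by augmenting~$\G$ with horizontal edges $(z,n)\to(z\pm 2e_j,n)$ to form a graph~$\H$, and then invokes \cite[Lemma~9]{lanchier_2013} to show that dry paths in~$\H$ still do not percolate. Without this step, your claim that defectors cannot propagate between distinct dry clusters is not justified.
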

\begin{proof}
 Let~$\H$ be the directed graph with the same vertex set~$\Lat$ as the directed graph~$\G$ but with the additional horizontal arrows:
 $$ (z, n) \to (z', n) \quad \hbox{if} \quad z' = z \pm 2e_j \ \hbox{for some} \ j = 1, 2, \ldots, d. $$
 Writing~$(w, 0) \to_{\G} (z, n)$ to indicate the presence of a dry path in~$\G$, i.e., a directed path in~$\G$ of sites that are not wet,
 Durrett~\cite{durrett_1992} proved that, when~$\ep > 0$ is sufficiently small and sites are open with probability~$1 - \ep$, dry sites
 do not percolate, which implies that
 $$ \begin{array}{l} \lim_{n \to \infty} P ((w, 0) \to_{\G} (z, n) \ \hbox{for some} \ w \in \Z) = 0 \end{array} $$
 for all~$z \in \Z$.
 Note that a dry path in~$\G$ is also a dry path in~$\H$ but the converse is false because the latter has more oriented edges than the former.
 However, Lemma~9 in~\cite{lanchier_2013} shows that the previous result can be extended to dry paths in~$\H$, i.e.,
\begin{equation}
\label{eq:dry-1}
 \begin{array}{l} \lim_{n \to \infty} P ((w, 0) \to_{\H} (z, n) \ \hbox{for some} \ w \in \Z) = 0 \end{array}
\end{equation}
 where~$(w, 0) \to_{\H} (z, n)$ indicates the presence of a dry path in~$\H$.
 Now, observe that, because defectors cannot appear spontaneously,
\begin{equation}
\label{eq:dry-2}
  \begin{array}{l}
  \xi_t (x) = \d \ \hbox{for some} \ (x, t) \in \B_z \times [nT, (n + 1) T] \vspace*{4pt} \\ \hspace*{40pt}
  \hbox{implies that} \quad (w, 0) \to_{\H} (z, n) \ \hbox{for some} \ w \in \Z. \end{array}
\end{equation}
 Combining~\eqref{eq:dry-1} and~\eqref{eq:dry-2}, we deduce that
 $$ \begin{array}{l} P (\xi_t (x) = \d \ \hbox{i.o.}) \leq \lim_{n \to \infty} P ((w, 0) \to_{\H} (z, n) \ \hbox{for some} \ w \in \Z) = 0. \end{array} $$
 This completes the proof.
\end{proof} \\ \\
 Combining the previous two lemmas shows that the cooperators win for all~$\rc < \infty$ sufficiently large, which is equivalent to~$\rc^+ (\b, \rd) < \infty$.


\section{The defectors win (lower bound for~$\rc^-$)}
\label{sec:lower}

\indent In this section, we prove that
\begin{equation}
\label{eq:critical}
  \rc^- (\b, \rd) > 2d \rd / (2d - 1) > \rd \quad \hbox{for all} \quad \b > \b_* \ \hbox{and} \ \rd > 0,
\end{equation}
 meaning that there exists~$\rc > 2d \rd / (2d - 1)$ such that the defectors win.
 This, together with the result from the previous section, will complete the proof of Theorem~\ref{th:critical}.
 To begin with, observe that the rate at which an empty site, say~$x$, becomes occupied by a defector is
 $$ \sum_{y \sim x} \bigg(\frac{\b + \rd}{2d} \bigg) $$
 while the rate at which it becomes occupied by a cooperator is
 $$ \sum_{y \sim x} \bigg(\frac{\b}{2d} + \sum_{z \sim y} \ \frac{\rc \,\ind \{\xi (z) = \c \}}{4d^2} \bigg) \leq
    \sum_{y \sim x} \bigg(\frac{\b}{2d} + \sum_{z \sim y} \ \frac{\rc}{4d^2} \bigg) = \sum_{y \sim x} \bigg(\frac{\b + \rc}{2d} \bigg). $$
 In particular, the process can be coupled with a multi-type contact process in which type~1 particles give birth at rate~$\b + \rc$
 and type~2 particles give birth at rate~$\b + \rd$ in such a way that type~1 particles dominate the cooperators and type~2 particles are dominated by the
 defectors, provided this is true initially.
 Starting from a translation invariant product measure with a positive density of each type, Theorem~1 in~\cite{neuhauser_1992} states that type~2 particles
 win whenever
 $$ \b + \rc < \b + \rd \quad \hbox{and} \quad \b > \b_*. $$
 This and the coupling imply that, when~$\rc > \rd$, the defectors win, so~$\rc^- (\b, \rd) \geq \rd$.
 Proving the bound in~\eqref{eq:critical} is more difficult.
 The idea is to show that the defectors again win when
 $$ \rc = 2d \rd / (2d - 1) > 0 \quad \hbox{and} \quad \b > \b_* $$
 using a block construction to compare the process properly rescaled in space and time with oriented site percolation as in the previous section.
 The block construction allows for the use of a perturbation argument to deduce that the defectors still win for some~$\rc > 2d \rd / (2d - 1)$. \\


\noindent {\bf Graphical representation}.
 First, we construct the process with
\begin{equation}
\label{eq:equal}
  \rc = 2d \rd / (2d - 1) > 0
\end{equation}
 by coupling the births and deaths of the cooperators and defectors.
 The process can be constructed from the first three collections of Poisson processes in~\eqref{eq:harris} as follows.
\begin{itemize}
 \item At the times of the Poisson process~$B (x, y)$, we draw an arrow~$y \to x$ to indicate that if~$y$ is occupied by a cooperator,
       respectively a defector, and~$x$ is empty, then~$x$ becomes occupied by a cooperator, respectively a defector. \vspace*{2pt}
 \item At the times of the Poisson process~$U (x)$, we put a cross~$\times$ at site~$x$ to indicate that if~$x$ is occupied then it becomes empty. \vspace*{2pt}
 \item At the times of the Poisson process~$C (x, y, z)$ with~$z \neq x$, we put a dot~$\bullet$ at site~$z$ and draw an arrow~$y \to x$ to indicate that if both
       site~$y$ and site~$z$ are occupied by cooperators and site~$x$ is empty, then site~$x$ becomes occupied by a cooperator, while if site~$y$ is occupied by
       a defector and site~$x$ is empty, then site~$x$ becomes occupied by a defector. \vspace*{2pt}
 \item At the times of the Poisson process~$C (x, y, z)$ with~$z = x$, we put a dot~$\bullet$ at site~$z$ and draw a~$\c$-arrow~$y \to x$ to indicate that if
       both site~$y$ and site~$z$ are occupied by cooperators and site~$x$ is empty, then site~$x$ becomes occupied by a cooperator.
\end{itemize}
 See Figure~\ref{fig:coupling-2} for a picture.
\begin{figure}[t]
\centering
\scalebox{0.40}{\input{coupling-2.pstex_t}}
\caption{\upshape Graphical representation with~$\rc = 2d \rd / (2d - 1)$.}
\label{fig:coupling-2}
\end{figure}
 Note that, for each pair of neighbors~$x \sim y$,
 $$ \card \,\{z \in \Z^d : z \sim y \ \hbox{and} \ z \neq x \} \,(\rc / 4d^2) = (2d - 1)(\rc / 4d^2) = \rd / 2d. $$
 In particular, the superposition property for independent Poisson processes implies that there is an unlabeled dot-arrow~$y \to x$ at the times of a Poisson
 process with intensity~$\rd / 2d$, therefore the construction indeed produces the desired birth rates.
 To prove that the defectors win when~\eqref{eq:equal} holds, we now show that, regardless of the initial configuration, the cooperators cannot give birth
 through the~$\c$-arrows or some of the unlabeled dot-arrows.
\begin{lemma} --
\label{lem:remove-arrow}
 The~$\c$-arrows have no effect.
\end{lemma}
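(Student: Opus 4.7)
The plan is to observe that the activation condition for any $\c$-arrow is logically inconsistent, and hence it can never fire.

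Recall from the construction that a $\c$-arrow $y \to x$ is placed precisely at the times of the Poisson process $C(x,y,z)$ in the case $z = x$. By the stated rule, this arrow has the effect of turning the empty site $x$ into a cooperator only when both $y$ and $z$ are occupied by cooperators and $x$ is empty. But when $z = x$, the requirements ``site $z$ is occupied by a cooperator'' and ``site $x$ is empty'' refer to the same site at the same moment, so they cannot both be satisfied in any configuration $\xi_t$. Therefore the indicator of the triggering event is identically zero, and no such arrow ever produces a birth.

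The only subtlety is to check that discarding the $\c$-arrows does not disturb the overall rate balance we used to justify the graphical representation in the first place. This is immediate from the previous paragraph's accounting: for each ordered pair of neighbors $x \sim y$, the unlabeled dot-arrows $y \to x$ coming from $C(x,y,z)$ with $z \neq x$ and $z \sim y$ already arrive at total rate $(2d-1)(\rc/4d^2) = \rd/2d$, which is exactly the required defector birth contribution when $\rc = 2d\rd/(2d-1)$, and the cooperator birth rate from $y$ to $x$ coming from these same unlabeled dot-arrows (activated when both $y$ and $z$ hold cooperators) still gives the correct fraction-of-cooperator-neighbors weighting. Thus the $\c$-arrows are redundant in the sense that removing them yields the same Markov generator, which is precisely the content of the lemma. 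There is no real obstacle here; the proof amounts to pointing out the $x = z$ clash in the definition.
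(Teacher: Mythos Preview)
Your proof is correct and rests on the same observation as the paper: since a $\c$-arrow arises from $C(x,y,z)$ with $z=x$, the triggering condition requires $x$ to be simultaneously empty and occupied by a cooperator, which is impossible. The paper reaches this via an exhaustive four-case analysis on the state of $(x,y)$ just before the arrow fires, with the contradiction appearing only in the last case (the other three cases being configurations in which any arrow of this type would be inert regardless of $z$); your version simply identifies the contradiction directly, which is cleaner but not a different idea. Your second paragraph on rate accounting is extraneous to the lemma as stated---the paper handles that bookkeeping separately, just before the lemma---but it does no harm.
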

\begin{proof}
 Let~$s \in C (x, y, z)$ with~$z = x$ be the arrival time of a~$\c$-arrow~$y \to x$.
 Then, we have the following alternative for the configuration just before time~$s$.
\begin{itemize}
 \item Site~$x$ is occupied by a defector or a cooperator.
       In this case, any potential birth through the~$\c$-arrow is suppressed so the state at site~$x$ remains the same. \vspace*{2pt}
 \item Both site~$x$ and site~$y$ are empty.
       In this case, because site~$y$ is empty, no individual gives birth through the~$\c$-arrow so site~$x$ remains empty. \vspace*{2pt}
 \item Site~$x$ is empty and site~$y$ is occupied by a defector.
       In this case, because defectors cannot give birth through a~$\c$-arrow, site~$x$ remains empty. \vspace*{2pt}
 \item Site~$x$ is empty and site~$y$ is occupied by a cooperator.
       In this case, site~$x$ becomes occupied by a cooperator if and only if site~$z$ is occupied by a cooperator just before time~$s$.
       But we have~$z = x$ so site~$z$ is empty just before time~$s$ and site~$x$ remains empty.
\end{itemize}
 In either case, the state at site~$x$ remains unchanged at time~$s$.
\end{proof} \\ \\
 To state our next results, for each time~$s \in C (x, y, z)$, we let
 $$ \begin{array}{rcl}
     u (s) & \n = \n & \sup \,\{t < s : t \in U (z) \} \vspace*{4pt} \\
     v (s) & \n = \n & \sup \,\{t < s : t \in B (z, y') \ \hbox{for some} \ y' \sim z \ \hbox{or} \ t \in C (z, y', z') \ \hbox{for some} \ y' \sim z, z' \sim y' \} \end{array} $$
 denote respectively the last time before time~$s$ there is a death mark~$\times$ at site~$z$ and the last time before time~$s$ there is either an arrow or a dot-arrow
 pointing at site~$z$.
 Then, we say that the dot-arrow~$y \to x$ at time~$s$ is sterile whenever
 $$ s - u (s) < 1 < s - v (s) < 2. $$
\begin{lemma} --
\label{lem:proba-sterile}
 Let~$s_i \in C (x_i, y_i, z_i)$ for~$i = 1, 2, \ldots, n$.
 Then the dot-arrows~$y_i \to x_i$ at times~$s_i$ are independently sterile with the same probability
 $$ (1 - e^{-1})(1 - e^{- (\b + \rc)}) \,e^{- (\b + \rc)} > 0 $$
 whenever~$|x_i - x_j| \vee |s_i - s_j| > 2$ and~$s_i > 2$ for all~$i \neq j$.
\end{lemma}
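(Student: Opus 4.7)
The plan is to separate the lemma into two independent tasks: computing the sterility probability for a single dot-arrow, and verifying that the sterility events of different dot-arrows depend on disjoint sets of Poisson processes in the graphical representation~\eqref{eq:harris}.

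For the single-arrow calculation, fix a dot-arrow $y \to x$ at time $s \in C(x, y, z)$ with $s > 2$. Unpacking the definitions of $u(s)$ and $v(s)$, sterility is equivalent to the simultaneous occurrence of three sub-events concerning only the Poisson processes attached to site $z$: $U(z)$ has at least one mark in $(s-1, s)$; no arrow or dot-arrow points at $z$ during $(s-1, s)$; and at least one arrow or dot-arrow points at $z$ during $(s-2, s-1)$. By the superposition property, the combined intensity of arrows and dot-arrows pointing at $z$ is
$$
\sum_{y' \sim z} \frac{\b}{2d} \;+\; \sum_{y' \sim z}\;\sum_{z' \sim y'} \frac{\rc}{4d^2} \;=\; \b + \rc.
$$
Since $U(z)$ is independent of the $B$- and $C$-families and the intervals $(s-2, s-1)$ and $(s-1, s)$ are disjoint, the three sub-events are mutually independent, and multiplying their probabilities $1 - e^{-1}$, $e^{-(\b + \rc)}$, and $1 - e^{-(\b + \rc)}$ yields the claimed formula. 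The hypothesis $s > 2$ is used only to ensure that the relevant time window lies in the region where the graphical representation has been defined.

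For joint independence of the sterility events across the $n$ dot-arrows, observe that the sterility of the $i$th dot-arrow is measurable with respect to $U(z_i)$, $\{B(z_i, y')\}_{y' \sim z_i}$, and $\{C(z_i, y', z')\}_{y' \sim z_i,\, z' \sim y'}$, each restricted to the time window $(s_i - 2, s_i)$. The separation hypothesis $|x_i - x_j| \vee |s_i - s_j| > 2$ is arranged precisely so that these families are pairwise disjoint across $i \ne j$: either the temporal separation makes the time windows $(s_i-2,s_i)$ and $(s_j-2,s_j)$ disjoint, so that the independent-increments property applies, or the spatial separation, combined with the graphical constraints $z_i \sim y_i \sim x_i$ localizing $z_i$ near $x_i$, forces the indexing sites of the relevant Poisson families to differ, so that the families are independent by construction. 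Multiplying the individual sterility probabilities then yields the conclusion.

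The single-arrow step is routine exponential-distribution bookkeeping. The main bookkeeping effort is in the joint-independence step, where one must enumerate precisely which Poisson processes enter the sterility of each dot-arrow and check that the separation hypothesis really produces disjoint index sets; I do not foresee any substantial obstacle beyond this careful enumeration.
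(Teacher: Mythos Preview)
Your proposal is correct and follows essentially the same approach as the paper: the paper phrases the single-arrow computation by noting that $s - u(s)$ is $\mathrm{Exponential}(1)$ and $s - v(s)$ is $\mathrm{Exponential}(\b + \rc)$ and then computing $P(s - u(s) < 1)\,P(1 < s - v(s) < 2)$, which is exactly your three-event decomposition rewritten, and it handles the joint independence with the same appeal to disjoint pieces of the graphical representation.
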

\begin{proof}
 The fact that the~$n$ events in the statement are independent follows from the fact that disjoint parts of the graphical representation are independent.
 The fact that they also have the same probability follows from the fact that (the distribution of) the graphical representation is translation invariant
 in space and time.
 In addition, assuming that the Poisson processes used in the graphical representation are defined for negative times,
 $$ s - u (s) = \exponential (1) \quad \hbox{and} \quad s - v (s) = \exponential (\b + \rc) $$
 and the two random variables are independent.
 It follows that
 $$ \begin{array}{l}
     P (s - u (s) < 1 < s - v (s) < 2) = P (s - u (s) < 1) \,P (1 < s - v (s) < 2) \vspace*{4pt} \\ \hspace*{25pt}
     = (1 - e^{-1})(e^{- (\b + \rc)} - e^{- 2 (\b + \rc)}) = (1 - e^{-1})(1 - e^{- (\b + \rc)}) \,e^{- (\b + \rc)} > 0. \end{array} $$
 This completes the proof.
\end{proof}
\begin{lemma} --
\label{lem:effect-sterile}
 Cooperators cannot give birth through a sterile arrow.
\end{lemma}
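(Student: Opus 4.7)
The plan is to unpack the definition of ``sterile'' and use the graphical representation to show that, when the dot-arrow $y \to x$ at time $s \in C(x,y,z)$ is sterile, the site $z$ is empty just before time $s$; since a cooperator birth at $x$ via this dot-arrow requires $z$ to host a cooperator at time $s^-$, this forces the dot-arrow to be incapable of producing a cooperator.

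The first step is the order of $u(s)$ and $v(s)$. The sterile condition $s - u(s) < 1 < s - v(s) < 2$ rearranges to
\[
  v(s) \ < \ s - 1 \ < \ u(s) \ < \ s,
\]
so the last death mark at $z$ before time $s$ occurs strictly after the last arrow or dot-arrow pointing at $z$ before time $s$. In particular, no arrow of type $B(z, y')$ and no dot-arrow coming from a mark in $C(z, y', z')$ is deposited on $z$ during the open interval $(u(s), s)$.

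The second step is to track the state of site $z$ on the interval $(u(s), s)$. The only mechanisms in the graphical representation that can change the state of $z$ are death marks, unlabeled arrows pointing at $z$, unlabeled dot-arrows pointing at $z$, and $\mathbf{c}$-arrows pointing at $z$. At time $u(s)$ the death mark empties $z$; between $u(s)$ and $s$, the previous step rules out every arrow and dot-arrow pointing at $z$; finally, Lemma~\ref{lem:remove-arrow} shows that any $\mathbf{c}$-arrow occurring in $(u(s), s)$ has no effect. Consequently $z$ is empty at every time in $(u(s), s)$, and in particular at time $s^-$.

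The third step is the conclusion. By the construction of the graphical representation at points of $C(x, y, z)$ with $z \neq x$, a cooperator is produced at $x$ via the dot-arrow $y \to x$ at time $s$ only if site $y$ \emph{and} site $z$ are both occupied by cooperators just before time $s$. Since $z$ is empty at time $s^-$, this joint condition fails, so no cooperator birth is triggered at $x$ through this sterile dot-arrow, which is exactly what the lemma asserts. There is no substantive obstacle here: the work is entirely in reading off $v(s) < u(s)$ from the definition of sterility and combining it with Lemma~\ref{lem:remove-arrow} to guarantee that $z$ stays empty throughout $(u(s), s)$.
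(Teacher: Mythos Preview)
Your proof is correct and follows essentially the same route as the paper's: use the sterility condition to see that $v(s) < u(s) < s$, conclude that $z$ is empty at time $s^-$, and observe that this blocks any cooperator birth through the dot-arrow. One small remark: your separate invocation of Lemma~\ref{lem:remove-arrow} for $\mathbf{c}$-arrows is unnecessary, since the definition of $v(s)$ already ranges over all $C(z, y', z')$ with $z' \sim y'$, which includes the case $z' = z$; hence $\mathbf{c}$-arrows pointing at $z$ are already ruled out on $(u(s), s)$ by step one.
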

\begin{proof}
 Let~$s \in C (x, y, z)$ be the time of a sterile dot-arrow~$y \to x$.
 By definition, there is a death mark at site~$z$ at time~$ u (s) < s$ and no arrow pointing at site~$z$ between time~$u (s)$ and time~$s$, therefore
 site~$z$ must be empty at time~$s$.
 It follows that, even if site~$y$ is occupied by a cooperator at time~$s$, this cooperator cannot give birth through the dot-arrow.
\end{proof} \\ \\
 Motivated by Lemma~\ref{lem:remove-arrow}, we remove all the~$\c$-arrows from the graphical representation.
 Motivated by Lemma~\ref{lem:effect-sterile}, we also label all the sterile dot-arrows with a~$\d$ to indicate that only the defectors can give birth
 through these arrows. \\


\noindent {\bf Duality relationship}.
 The fact that the defectors win when~\eqref{eq:equal} holds follows from duality techniques in~\cite{durrett_neuhauser_1997} for the multi-type
 contact process.
 The multi-type contact process is associated to a so-called dual process in such a way that the state of each space-time point can be determined from
 the initial configuration and the structure of the dual process.
 For our cooperator-defector system, we define a process that we again call the dual process.
 This process, however, only gives a partial information of the state of each space-time point.
 Given a realization of the graphical representation above where the~$\c$-arrows have been removed and the sterile dot-arrows have been labeled with a~$\d$,
 we define paths, dual paths and dual process as follows.
\begin{definition}[dual path] --
 There is a path~$(y, s) \uparrow (x, t)$ if there are
 $$ \hbox{times} \ \ s = s_0 < s_1 < \cdots < s_{n + 1} = t \quad \hbox{and} \quad \hbox{sites} \ \ y = x_0, x_1, \ldots, x_n = x $$
 such that the following two conditions hold:
\begin{enumerate}
 \item For $i = 1, 2, \ldots, n$, there is an arrow from $x_{i - 1}$ to $x_i$ at time $s_i$ and \vspace{2pt}
 \item For $i = 0, 1, \ldots, n$, the vertical segments $\{x_i \} \times (s_i, s_{i + 1})$ do not contain any $\times$'s.
\end{enumerate}
 There is a dual path~$(x, t) \downarrow (y, t - s)$ if there is a path~$(y, t - s) \uparrow (x, t)$.
\end{definition}
\begin{definition}[dual process] --
 The dual process starting at~$(x, t)$ is defined as
 $$ \hat \xi_s^{(x, t)} = \{y \in \Z^d : (x, t) \downarrow (y, t - s) \} \quad \hbox{for all} \quad 0 \leq s \leq t. $$
\end{definition}
 There might be several dual paths connecting two space-time points, so strictly speaking the state of the dual process
 is not a subset but a multi-subset of~$\Z^d$ where each site is assigned a multiplicity representing a number of dual paths.
 The dual process can be visualized by injecting a fluid at~$(x, t)$ that flows down (backward in time), is stopped at the death marks
 and crosses the arrows in the opposite direction, as shown in Figure~\ref{fig:dual}.
 The dual process keeps track of all the potential ancestors of the individual (if any) at site~$x$ at time~$t$.
 The space-time set filled with the fluid, namely
 $$ \Gamma = \{(\hat \xi_s^{(x, t)}, s) : 0 \leq s \leq t \}, $$
 exhibits a tree structure that induces an ancestor hierarchy corresponding to the order in which ancestors determine the type of~$(x, t)$.
 The ancestor hierarchy for our model is the same as for the multi-type contact process and was first described in~\cite{neuhauser_1992},
 but we follow an idea in~\cite{lanchier_neuhauser_2006} to give a more rigorous definition.
 The hierarchy can be defined using a function
 $$ \phi : \Gamma \longrightarrow \mathfrak S $$
 that maps the tree structure into the set~$\mathfrak S$ of integer-valued sequences equipped with the usual lexicographic order~$\ll$ defined as
 $$ u \ll v \quad \hbox{if and only if} \quad u_i = v_i \ \ \hbox{for} \ \ i = 1, 2, \ldots, n - 1 \quad \hbox{and} \quad u_n < v_n $$
 for some integer $n \geq 1$.
 For all~$u \in \mathfrak S$, it is convenient to identify
 $$ u \equiv (u_1, u_2, \ldots, u_n) \quad \hbox{whenever} \quad u_n \neq 0 \ \ \hbox{and} \ \ u_{n + 1} = u_{n + 2} = u_{n + 3} = \cdots = 0. $$
 The function~$\phi$ is defined inductively as follows.
\begin{itemize}
 \item We start at~$\phi (x, 0) = (1, 0, 0, \ldots) \equiv 1$. \vspace*{2pt}
 \item Assume that~$y$ is added to the dual process at dual time~$s_*$ and that
       $$ \phi (y, s_*) = u \equiv (u_1, u_2, \ldots, u_n). $$
  \begin{itemize}
   \item[$\circ$] Go down the graphical representation starting at~$(y, t - s_*)$, let~$t - s_0$ be the first time a death mark~$\times$ is
                  encountered at site~$y$, then set
                  $$ \phi (y, s) = \phi (y, s_*) = u \quad \hbox{for all} \quad s_* \leq s < s_0. $$
   \item[$\circ$] Go back up from~$(y, t - s_0)$ to~$(y, t - s_*)$, let
                  $$ t - s_1 < t - s_2 < \cdots < t - s_m $$
                  be the times at which we encounter the tip of an arrow and denote the sites these~$m$ arrows originate from by~$y_1, y_2, \ldots, y_m$, respectively.
                  Then we set
                  $$ \phi (y_i, s_i) = (u_1, u_2, \ldots, u_n, i) \quad \hbox{for all} \quad i = 1, 2, \ldots, m. $$
  \end{itemize}
\end{itemize}
 At a given dual time~$s$, the ancestor hierarchy is described by
 $$ \hbox{$y$ comes before~$z$ in the hierarchy} \quad \hbox{if and only if} \quad \phi (y, s) \ll \phi (z, s). $$
 Figure~\ref{fig:dual} gives an example of a realization of the graphical representation along with the dual process drawn in thick solid and dashed lines.
 The solid lines keep track of the position of the first ancestor and the numbers at the bottom of the picture show the ancestor hierarchy.
 Note that there are two dual paths~$(x, t) \downarrow (0, 0)$ therefore site~0 appears twice in the ancestor hierarchy.
 It is both the second and the fourth ancestors.
 In our example, the ancestor hierarchy is
 $$ \hat \xi_t^{(x, t)} = (-1, 0, -3, 0, 2, 3). $$
 The dual process of our cooperator-defector system is similar to the dual process of the multi-type contact process~\cite{neuhauser_1992}.
 However, for the multi-type contact process, the state at~$(x, t)$ can be determined from the dual process and the initial
 configuration, whereas for our model, the dual process only gives a partial information.
 In our example, we have the following alternative.
\begin{itemize}
 \item[1.] All the ancestors are (initially) empty, in which case~$(x, t)$ is empty. \vspace*{2pt}
 \item[2.] At least one of the ancestors is occupied and the first occupied ancestor is occupied by a defector, in which case~$(x, t)$ is occupied by
           a defector. \vspace*{2pt}
 \item[3.] Items 1 and 2 do not hold.
           In this case, the first three ancestors are irrelevant because if any of these three ancestors is occupied by a cooperator then the cooperator
           is blocked by the~$\d$-arrow on the left of the picture on its way up to~$(x, t)$ and none of the three ancestors can be the actual ancestor of~$(x, t)$.
           Then, we have the following alternative.
 \begin{itemize}
  \item[$\circ$] The fourth ancestor, site~0, is occupied, in which case~$(x, t)$ is of this type.
  \item[$\circ$] The fourth ancestor is empty and the next occupied ancestor (if any) is occupied by a defector, in which case~$(x, t)$ is occupied by a defector.
  \item[$\circ$] The fourth ancestor is empty and the next occupied ancestor (if any) is occupied by a cooperator.
                 In this case, the cooperator meets the tail of a dot-arrow on its way up so the outcome depends on the state at the dot.
                 In particular, the dual process alone does not provide enough information to determine the state at~$(x, t)$.
 \end{itemize}
\end{itemize}
 Even though the dual process we defined does not always provide enough information to deduce the state at~$(x, t)$ from the initial configuration, the presence
 of~$\d$-arrows that block the cooperators while the defectors can give birth through all the arrows is sufficient to prove that the probability that~$(x, t)$ is
 occupied by a cooperator tends to zero as~$t \to \infty$. \\
\begin{figure}[t]
\centering
\scalebox{0.40}{\input{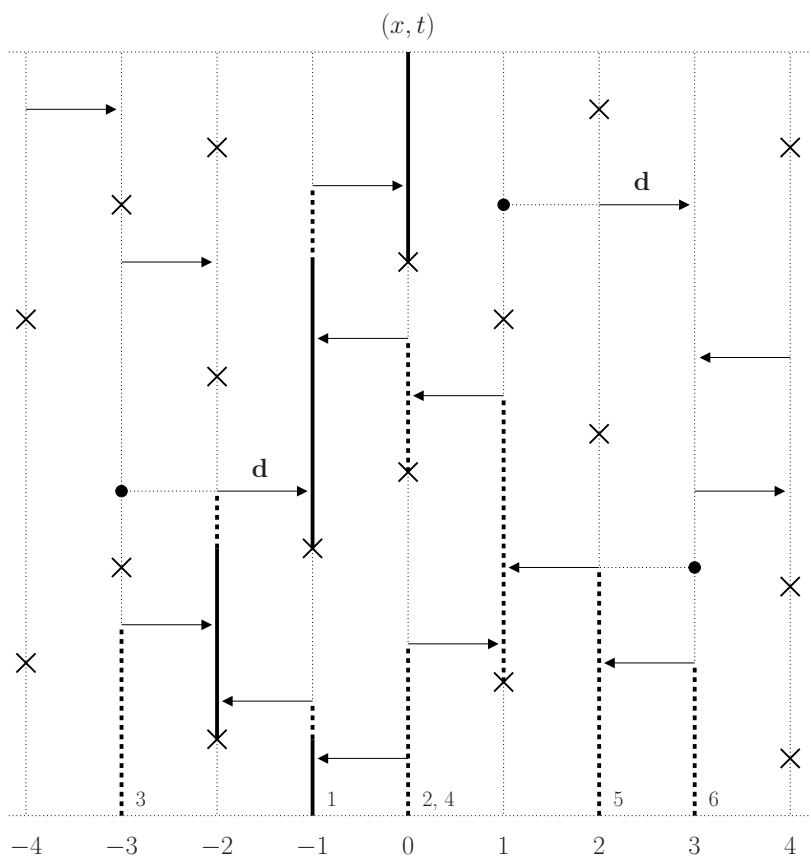}}
\caption{\upshape Dual process.}
\label{fig:dual}
\end{figure}


\noindent {\bf Block construction}.
 The dual process of the cooperator-defector system is similar to the dual process of the multi-type contact process in which the defectors have the
 same death rate as the cooperators but a higher birth rate.
 There is a slight difference in the structure.
\begin{itemize}
 \item In the multi-type contact process, all the arrows are independently turned into~$\d$-arrows with a fixed probability whereas the dot-arrows in
       the cooperator-defector system are turned into~$\d$-arrows based on the nearby graphical representation.
\end{itemize}
 There is also a difference in the interpretation.
\begin{itemize}
 \item For the multi-type contact process, the type of~$(x, t)$ can be determined from the type of the ancestors and the structure of the dual process.
       Because the cooperators cannot give birth through an unlabeled dot-arrow when the site marked with the dot is not occupied by a cooperator, for the
       cooperator-defector system, the initial configuration and the dual process are not always sufficient to determine the state at~$(x, t)$.
       However, the initial configuration and the dual process being fixed, if~$(x, t)$ is occupied by a defector in the multi-type contact process then
       it is also occupied by a defector for the cooperator-defector system.
\end{itemize}
 Neuhauser~\cite{neuhauser_1992} proved that the path of the first ancestor, which is the same for both processes, can be broken into independent and
 identically distributed pieces at some renewal points using an idea of Kuczek~\cite{kuczek_1989}.
 This idea was again used in~\cite{durrett_neuhauser_1997} in combination with a so-called repositioning algorithm to prove that, when properly rescaled
 in space and time, the multi-type contact process can be coupled with oriented site percolation to prove that the type with the higher birth rate expands
 linearly.
 Their proof is rather lengthy but easily adapts to our cooperator-defector system so we just state their result.
 Even though they focused on the~$d = 2$ case because this was the case of interest in their work, their argument holds in any spatial dimensions.
 As previously, let
 $$ \Lat = \{(z, n) \in \Z^d \times \N : z_1 + \cdots + z_d + n \ \hbox{is even} \}. $$
 which we turn into a directed graph using the same construction as before.
 Let~$L$ be a large integer to be fixed later, and introduce the spatial boxes
 $$ \B_z = Lz + [-L, L]^d \quad \hbox{for all} \quad z \in \Z^d. $$
 We further partition each~$\B_z$ by setting
 $$ \D_w = L^{0.1} w + (- L^{0.1} / 2, L^{0.1} / 2]^d \quad \hbox{and} \quad I_z = \{w \in \Z^d : \D_w \subset \B_z \}. $$
 Define also the collection of space-time rectangles
 $$ \B (z, n) = \B_z \times (nT + [0, T)) \quad \hbox{for all} \quad (z, n) \in \Lat $$
 where~$T = L^2$.
 Then, we consider the two collections of events
 $$ \begin{array}{rcl}
    \C_- (z, n) & \n : \n & \xi_t (x) \neq \c \ \hbox{for all} \ (x, t) \in \B (z, n) \vspace*{4pt} \\
    \C_+ (z, n) & \n : \n & \hbox{there is~$x \in \D_w$ such that~$\xi_{nT} (x) = \d$ for all~$w \in I_z$} \vspace*{2pt} \\ & \n \n &
                            \hbox{and} \ \xi_{nT} (x) \neq \c \ \hbox{for all} \ x \in \B_z \end{array} $$
 for all~$(z, n) \in \Lat$.
 In words, the rectangle~$\B (z, n)$ is void of cooperators, and each~$\D_w \subset \B_z$ is void of cooperators and has at least one
 defector at time~$nT$.
\begin{proposition} --
\label{prop:block}
 Assume~$\b > \b_*$ and~$\rc = 2d \rd / (2d - 1) > 0$.
 For all~$\ep > 0$, there exists a collection of events~$\A (z, n)$ that are measurable with respect to the graphical representation restricted to the
 space-time region
 $$ \R (z, n) = (Lz, nT) + ([- 3L, 3L]^d \times [0, 2T]) $$
 and such that, for all~$L$ sufficiently large,
 $$ \A (z, n) \cap \C_+ (z, n) \subset \C_+ (z \pm e_i, n + 1) \cap \C_- (z \pm e_i, n + 1) $$
 for all~$i = 1, 2, \ldots, d$ and~$P (\A (z, n)) \geq 1 - \ep$.
\end{proposition}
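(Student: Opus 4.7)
The strategy adapts the block construction of Durrett and Neuhauser~\cite{durrett_neuhauser_1997} to the modified graphical representation from Section~\ref{sec:lower}, in which~$\c$-arrows have been removed and sterile dot-arrows bear the label~$\d$. At the critical ratio~$\rc=2d\rd/(2d-1)$ cooperators and defectors see arrows at the same total rate, but Lemma~\ref{lem:proba-sterile} gives a fixed positive probability~$q=(1-e^{-1})(1-e^{-(\b+\rc)})e^{-(\b+\rc)}$ that an arrow is sterile and thus usable only by defectors. This fixed-fraction advantage plays exactly the role that the birth-rate asymmetry plays in the standard multi-type contact process, allowing one to rescale in space and time and couple with oriented site percolation on defector-occupied sub-boxes.

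First I would handle the cooperator-absence condition~$\C_-(z\pm e_i,n+1)$. Any cooperator at~$(x,t)\in\B_{z\pm e_i}\times[(n+1)T,(n+2)T)$ forces the existence of a backward cooperator-ancestry path from~$(x,t)$ that avoids~$\d$-labels and sterile arrows and terminates at a cooperator at some earlier time. Using Kuczek's regeneration argument~\cite{kuczek_1989} as in Neuhauser~\cite{neuhauser_1992}, the first-ancestor path is a random walk decomposable into i.i.d.\ pieces with finite exponential moments, so that on a high-probability event measurable in~$\R(z,n)$ it is confined to~$Lz+[-3L,3L]^d$ up to dual time~$T$. A large-deviation bound on the number of sterile arrows encountered along this path (of order~$T$) then shows that with probability at least~$1-\ep/2$ for~$L$ large, every such path either terminates inside~$\B_z$, where~$\C_+(z,n)$ forbids cooperators, or else is blocked by a sterile arrow along the way. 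Second, for the defector-presence condition in~$\C_+(z\pm e_i,n+1)$, inside the cooperator-free region produced by the previous step the defectors dominate a supercritical contact process with parameter~$\b+\rd>\b_*$; by the standard block construction for the supercritical contact process~\cite{bezuidenhout_grimmett_1990,durrett_1995}, the defector in each~$\D_w\subset\B_z$ given by~$\C_+(z,n)$ produces a defector in every~$\D_{w'}\subset\B_{z\pm e_i}$ by time~$(n+1)T$ with probability at least~$1-\ep/2$ for~$L$ large, and every required event is measurable in~$\R(z,n)$. Taking~$\A(z,n)$ to be the intersection of these two high-probability events yields~$P(\A(z,n))\geq 1-\ep$ and the desired implication.

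The main obstacle is that cooperator ancestry in our process is not quite that of a standard multi-type contact process, since non-sterile dot-arrows carry a local constraint linking their usability to the state at the dot site. The Durrett--Neuhauser repositioning algorithm was designed precisely to handle such path dependencies through an enlargement of the ancestor hierarchy, and the sterility construction from Lemma~\ref{lem:proba-sterile} was set up to make the sterile labels depend only on the graphical representation in a bounded space-time neighborhood, preserving the renewal structure required by Kuczek's argument. Verifying that these compatibility properties suffice to transplant~\cite{durrett_neuhauser_1997} essentially verbatim is the technical heart of the proof; once in place, the block estimates above follow routinely from the analysis already carried out there.
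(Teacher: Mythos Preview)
Your proposal is correct and follows the same route as the paper: the paper does not give an independent proof but simply refers to Propositions~3.1 and~3.2 of Durrett--Neuhauser~\cite{durrett_neuhauser_1997}, noting that the Kuczek renewal structure for the first ancestor and the repositioning algorithm carry over to the present dual process, with the sterile dot-arrows playing the role of the birth-rate advantage. Your sketch is a faithful (and somewhat more detailed) unpacking of exactly this adaptation, including the key observation that sterility is determined by the graphical representation in a bounded space-time neighborhood so that the renewal/measurability structure is preserved.
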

 See Propositions~3.1 and~3.2 in~\cite{durrett_neuhauser_1997} for a proof, and Figure~\ref{fig:block} for an illustration. \\
\begin{figure}[t]
\centering
\scalebox{0.44}{\input{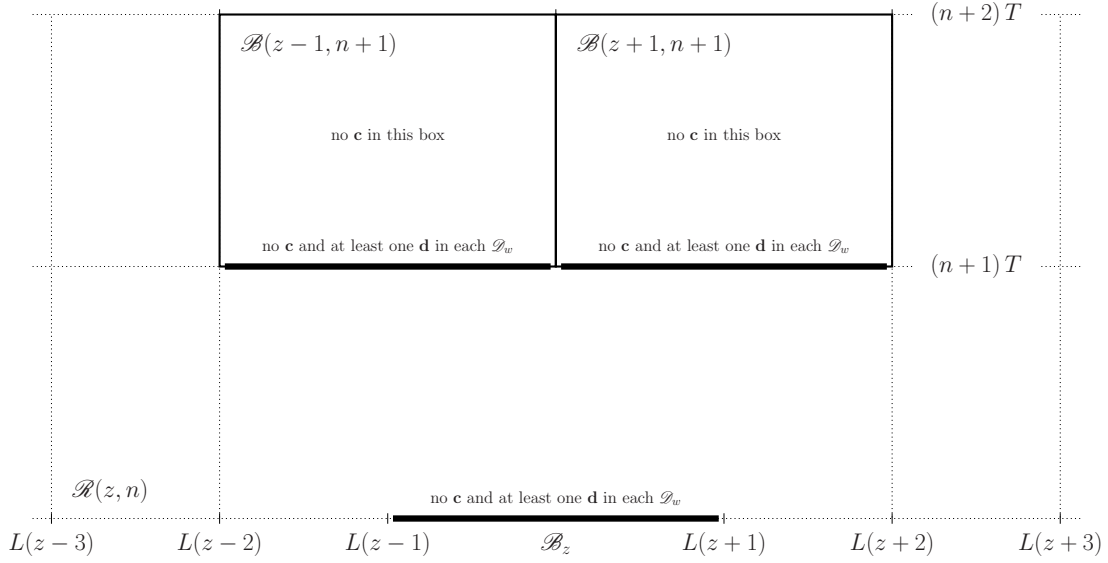}}
\caption{\upshape Illustration of Proposition~\ref{prop:block}.}
\label{fig:block}
\end{figure}


\noindent {\bf Proof of the lower bound}.
 The final step of the proof for the lower bound is similar to the final step of the proof for the upper bound, but using Proposition~\ref{prop:block} together
 with a perturbation argument instead of the three conditions~\eqref{eq:perco-1}--\eqref{eq:perco-3}. let
 $$ \rho = \rc - 2d \rd / (2d - 1). $$
 The goal is to prove that the defectors win for all~$\rho > 0$ sufficiently small.
 From now on, we think of the process with~$\rho > 0$ as being generated by the graphical representation introduced right after~\eqref{eq:equal} supplemented
 with additional dot-arrows.
 More precisely, we let
 $$ C_+ (x, y, z) = \hbox{Poisson process with intensity~$\rho / 4d^2$} \quad \hbox{whenever} \quad x \sim y \ \hbox{and} \ y \sim z. $$
 At the times of the Poisson process~$C_+ (x, y, z)$, we put a dot~$\bullet$ at site~$z$ and draw an arrow~$y \to x$ labelled with a~$\c_+$ to indicate that if
 both site~$y$ and site~$z$ are occupied by cooperators and site~$x$ is empty, then site~$x$ becomes occupied by a cooperator.
 For all~$(z, n) \in \Lat$, we let
 $$ \A_+ (z, n) = \A (z, n) \cap \C (z, n) $$
 where~$\A (z, n)$ is the event in Proposition~\ref{prop:block} and where
 $$ \begin{array}{rcl}
    \C (z, n) & \n : \n & \hbox{there is no~$\c_+$ arrow pointing at the region~$\R (z, n)$}. \end{array} $$
\begin{lemma} --
\label{lem:perturbation}
 Assume~$\b > \b_*$ and~$\rc > 2d \rd / (2d - 1) > 0$, and let~$\ep > 0$. Then,
 $$ P (\A_+ (z, n)) \geq 1 - \ep \quad \hbox{for some~$L$ large and~$\rho > 0$ small}. $$
\end{lemma}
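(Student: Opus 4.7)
The plan is to write $\A_+ (z, n) = \A (z, n) \cap \C (z, n)$ and bound the two events separately by a union bound. The crucial observation is that the event $\A (z, n)$ of Proposition~\ref{prop:block} is measurable with respect to the original Harris processes~\eqref{eq:harris} restricted to the space-time region~$\R (z, n)$, and in particular does not involve any of the additional Poisson processes~$C_+ (x, y, z')$. Since the~$C_+$ processes are independent of all the others, $\A (z, n)$ and~$\C (z, n)$ are independent, and it suffices to show that each occurs with probability at least~$1 - \ep / 2$ under an appropriate choice of~$L$ and~$\rho$.

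Fix~$\ep > 0$. First, Proposition~\ref{prop:block}, applied with~$\ep / 2$ in place of~$\ep$, directly gives the existence of~$L < \infty$ large such that~$P (\A (z, n)) \geq 1 - \ep / 2$ for all~$(z, n) \in \Lat$. The scale~$L$ is now fixed, which is the first step and uses the previously established block construction off the shelf.

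For the second piece, the~$\c_+$-arrows arriving in~$\R (z, n)$ form a Poisson point process whose total intensity can be estimated crudely as follows. The spatial cross-section~$Lz + [-3L, 3L]^d$ contains at most~$(6L + 1)^d$ sites~$x$, and for each such~$x$ there are exactly~$(2d)^2 = 4d^2$ triples~$(x, y, z')$ with~$y \sim x$ and~$z' \sim y$, each contributing a Poisson process of intensity~$\rho / 4d^2$. Integrating over the time window of length~$2T = 2L^2$ yields expected number of arrivals at most of order~$L^{d + 2} \rho$, so that
$$ P (\C (z, n)^c) \leq 2 (6L + 1)^d L^2 \rho. $$
With~$L$ already fixed, we can choose~$\rho > 0$ sufficiently small that this upper bound is at most~$\ep / 2$. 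A union bound then gives~$P (\A_+ (z, n)) \geq 1 - \ep$, completing the proof.

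There is no real obstacle here: the argument is a standard perturbation of Proposition~\ref{prop:block}. The only subtle point worth checking carefully is that~$\A (z, n)$ really is measurable with respect to the graphical representation without the~$C_+$ processes, so that fixing~$L$ based on Proposition~\ref{prop:block} at~$\rho = 0$ remains valid when~$\rho > 0$; this is built into the statement of Proposition~\ref{prop:block}, which describes the measurability of~$\A (z, n)$ in terms of~$\R (z, n)$ in the original graphical representation.
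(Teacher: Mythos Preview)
Your proof is correct and follows essentially the same approach as the paper: fix~$L$ using Proposition~\ref{prop:block} with~$\ep/2$, then choose~$\rho$ small so that the probability of any~$\c_+$-arrow landing in~$\R(z,n)$ is at most~$\ep/2$, and combine by a union bound. The paper writes the second estimate as~$P(\C(z,n)) = \exp(-2L^2(6L+1)^d\rho) \geq 1 - \ep/2$, which is the same Poisson computation you give; your remark on independence is correct but not actually needed, and your emphasis on the measurability of~$\A(z,n)$ with respect to the original graphical representation (without the~$C_+$ processes) is exactly the point that makes the two-step choice of~$L$ then~$\rho$ legitimate.
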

\begin{proof}
 Because Proposition~\ref{prop:block} above holds for all~$\ep > 0$ arbitrarily small, there exists a scaling parameter~$L$ large, fixed from now on, such that
\begin{equation}
\label{eq:perturbation-1}
  P (\A (z, n)) \geq 1 - \ep / 2.
\end{equation}
 The parameter~$L$ being fixed, in view of the common rate of the Poisson processes~$C_+ (x, y, z)$ and the common size of the space-time regions~$\R (z, n)$, we have
\begin{equation}
\label{eq:perturbation-2}
 \begin{array}{l}
   P (\C (z, n)) = P (\poisson (2T (6L + 1)^d \rho) = 0) \vspace*{4pt} \\ \hspace*{80pt}
                 = \exp (- 2L^2 (6L + 1)^d \rho) \geq 1 - \ep / 2 \end{array}
\end{equation}
 for all~$\rho > 0$ small.
 Combining~\eqref{eq:perturbation-1} and~\eqref{eq:perturbation-2}, we deduce that
 $$ \begin{array}{rcl}
      P (\A_+ (z, n)) & \n = \n & P (\A (z, n) \cap \C (z, n)) \vspace*{4pt} \\
                      & \n \geq \n & 1 - (1 - P (\A (z, n))) - (1 - P (\C (z, n))) \geq 1 - \ep/2 - \ep/2 = 1 - \ep. \end{array} $$
 This completes the proof.
\end{proof} \\ \\
 To deduce the lower bound, we say that
 $$ (z, n) \in \Lat \ \hbox{is~$\d$-occupied} \quad \hbox{if and only if} \quad \C_- (z, n) \cap \C_+ (z, n) \ \hbox{occurs} $$
 and let~$X_n = \{z : (z, n) \ \hbox{is~$\d$-occupied} \}. $
 Now, observe that
\begin{equation}
\label{eq:lower-1}
  \begin{array}{rl}
    \A_+ (z, n) = \A (z, n) \cap \C (z, n) & \hbox{is measurable with respect to the graphical} \vspace*{2pt} \\
                                           & \hbox{representation restricted to~$\R (z, n)$} \end{array}
\end{equation}
 because both~$\A (z, n)$ and~$\C (z, n)$ are.
 In addition, by Proposition~\ref{prop:block},
\begin{equation}
\label{eq:lower-2}
  \begin{array}{rcl}
    \A_+ (z, n) \cap \C_+ (z, n) & \n = \n & (\A (z, n) \cap \C (z, n)) \cap \C_+ (z, n) \vspace*{4pt} \\
                                 & \n \subset \n & \C_+ (z \pm e_i, n + 1) \cap \C_- (z \pm e_i, n + 1) \end{array}
\end{equation}
 for all~$i = 1, 2, \ldots, d$, where the events~$\C_- (z, n)$ and~$\C_+ (z, n)$ are for the process with~$\rho > 0$.
 Also, it follows from Lemma~\ref{lem:perturbation} that~$L$ can be chosen such that
\begin{equation}
\label{eq:lower-3}
  P (\A_+ (z, n)) \geq 1 - \ep \quad \hbox{for all~$(z, n) \in \Lat$ and all~$\rho > 0$ small}.
\end{equation}
 As in the proof of Lemma~\ref{lem:perco}, conditions~\eqref{eq:lower-1}--\eqref{eq:lower-3} and~\cite[Theorem~4.3]{durrett_1995} imply that the
 set of~$\d$-occupied sites dominates the set of wet sites in oriented site percolation:
 For all~$\ep > 0$, there exists~$L$ such that, for all~$\rho > 0$ small,
 $$ P (z \in W_n^{\ep}) \leq P (z \in X_n) \ \hbox{for all} \ (z, n) \in \Lat \quad \hbox{whenever} \quad W_0^{\ep} \subset X_0 $$
 where~$W_n^{\ep}$ is now the set of wet sites at level~$n$ in a three-dependent oriented site percolation process where sites are closed with probability~$\ep$.
 Following the exact same approach as in the proofs of Lemmas~\ref{lem:invade} and~\ref{lem:dry}, we conclude that the defectors win whenever
 $$ \b > \b_* \quad \hbox{and} \quad \rd > 0 \quad \hbox{and} \quad \rho = \rc - 2d \rd / (2d - 1) > 0 \quad \hbox{is sufficiently small} $$
 showing that~$\rc^- (\b, \rd) > 2d \rd / (2d - 1)$.
 This completes the proof of Theorem~\ref{th:critical}.
 

\end{document}